\frenchspacing \setlength{\parskip}{9pt plus 3pt minus 1pt}
\newenvironment{bullets} {\vspace{-9pt}\begin{itemize}\itemsep0pt} {\end{itemize}\vspace{-9pt}}
\newcommand{\+}{\hspace{0.07 em}}  
\newcommand{\nequiv}{\not\equiv}
\newcommand{\floor}[1]{\left\lfloor #1 \right\rfloor}
\newcommand{\liminfty}[1][n]{\lim\limits_{#1\rightarrow\infty}}
\newcommand{\limsupinfty}[1][n]{\limsup\limits_{#1\rightarrow\infty}}
\newcommand{\liminfinfty}[1][n]{\liminf\limits_{#1\rightarrow\infty}}
\newcommand{\bbT}{\mathbb{T}}
\newcommand{\bbZ}{\mathbb{Z}}
\newcommand{\nv}[1]{$#1$\rlap{$^{\dagger}$}}
\newcommand{\ov}[1]{$#1$}
\newcommand{\mv}[1]{$#1$\rlap{*}}
\newcommand{\pv}[1]{$#1$}
\newcommand{\plotgrid}[2]  
{
  \foreach \x in {1,...,#1} \draw[gray,very thin] (\x,1)--(\x,#2+1);
  \foreach \y in {1,...,#2} \draw[gray,very thin] (1,\y)--(#1+1,\y);
  \draw[thick,gray!75!white] (1,1) rectangle (#1+1,#2+1);
  \foreach \x in {1,...,#1}
  {
    \foreach \y in {1,...,#2} \fill[gray!75!black,radius=0.1] (\x,\y) circle;
  }
}
\DeclareMathOperator{\Cay}{Cay}
\newtheorem{theorem}{Theorem}
\newtheorem{corollary}[theorem]{Corollary}
\newtheorem{lemma}[theorem]{Lemma}
\newtheorem{observation}[theorem]{Observation}
\title{\textbf{Large circulant graphs \\
of fixed diameter and arbitrary degree}}
\author{
David Bevan\thanks{University of Strathclyde, Glasgow, UK}\\ \texttt{\small david.bevan@strath.ac.uk}
\and Grahame Erskine\thanks{Open University, Milton Keynes, UK}\\ \texttt{\small grahame.erskine@open.ac.uk}
\and Robert Lewis\footnotemark[2]\\ \texttt{\small robert.lewis@open.ac.uk}
}
\date{}
\begin{document}
\maketitle
\let\thefootnote\relax\footnote{MSC: 05C25,05C35. Keywords: degree-diameter problem, Cayley graphs, circulant graphs, sumsets.}
\begin{abstract}\noindent
We consider the degree-diameter problem for undirected and directed circulant graphs. 
To date, attempts to generate families of large circulant graphs of arbitrary degree for a given diameter have concentrated mainly on the diameter 2 case. 
We present a direct product construction yielding improved bounds for small diameters and introduce a new general technique for ``stitching'' 
together circulant graphs which enables us to improve the current best known asymptotic orders for every diameter. 
As an application, we use our constructions in the directed case to obtain upper bounds on the minimum size of a subset 
$A$ of a cyclic group of order $n$ such that the $k$-fold sumset $kA$ is equal to the whole group. 
We also present a revised table of largest known circulant graphs of small degree and diameter.
\end{abstract}

\section{Introduction}
The goal of the \emph{degree-diameter problem} is to identify the largest possible number $n(d,k)$ of vertices in a graph having diameter $k$ and maximum degree $d$.
This paper considers the problem for the restricted category of circulant graphs, which we view as Cayley graphs of cyclic groups.
We consider both undirected and directed versions of the problem in this paper.
For a history and more complete summary of the degree-diameter problem, see the survey paper by Miller and \v{S}ir\'a\v{n}~\cite{miller2005moore}.

All groups considered in this paper are Abelian (indeed cyclic) and hence we use additive notation for the group operation.
With this convention we define a \emph{Cayley graph} as follows.

Let $G$ be an Abelian group and $S\subseteq G$ a subset such that $0\notin S$.
Then the \emph{Cayley graph} $\Cay(G,S)$ has the elements of $G$ as its vertex set and each vertex $g$ has an edge to $g+s$ for each $s\in S$.
The following properties of $\Cay(G,S)$ are immediate from the definition.
\begin{bullets}
 \item $\Cay(G,S)$ has order $|G|$ and is a regular graph of degree $|S|$.
 \item $\Cay(G,S)$ has diameter at most $k$ if and only if every element of $G$ can be expressed as a sum of no more than $k$ elements of $S$.
 \item $\Cay(G,S)$ is an undirected graph if $S=-S$; otherwise it is a directed graph.
\end{bullets}

A \emph{circulant graph} is a Cayley graph of a cyclic group, and we use these terms interchangeably.

\vbox{
Throughout the paper we use the following notation.
\begin{bullets}
  \item $CC(d,k)$ is the largest order of an undirected circulant graph with degree $d$ and diameter $k$.
  \item $DCC(d,k)$ is the largest order of a directed circulant graph with degree $d$ and diameter $k$.
\end{bullets}
}
For a given diameter $k$, we are interested in determining the asymptotics of $CC(d,k)$ and $DCC(d,k)$ as the degree $d$ tends to infinity. We make use of the following limits.
\begin{bullets}
  \item $L^-_C(k) = \liminfinfty[d] \,CC(d,k)/d^k$;  ~$L^+_C(k) = \limsupinfty[d] \,CC(d,k)/d^k$.
  \item $L^-_D(k) = \liminfinfty[d] \,DCC(d,k)/d^k$; ~$L^+_D(k) = \limsupinfty[d] \,DCC(d,k)/d^k$.
\end{bullets}

We begin with some trivial bounds on $L^-$ and $L^+$.
The following asymptotic upper bound is easily obtained; see for example the survey paper~\cite{miller2005moore}:
\begin{observation}[Trivial upper bound]\label{obsTrivialUB}
  $L^+_C(k) \leq L^+_D(k) \leq \frac{1}{k!}$.
\end{observation}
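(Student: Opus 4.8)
The plan is to prove the two inequalities separately, with the second being the substantive one. For the first inequality $L^+_C(k) \leq L^+_D(k)$, I would observe that every undirected circulant graph is in particular a directed circulant graph of the same degree and diameter: a connection set $S$ with $S = -S$ is still a legitimate connection set when we drop the symmetry requirement. Hence $CC(d,k) \leq DCC(d,k)$ for every $d$ and $k$, and taking the $\limsup$ over $d$ of the inequality $CC(d,k)/d^k \leq DCC(d,k)/d^k$ yields the claim.

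For the second inequality, I would bound the order of an arbitrary directed circulant graph $\Cay(\bbZ_n,S)$ of degree $d = |S|$ and diameter at most $k$. By the reachability characterisation noted in the bullet points above, every element of $\bbZ_n$ is expressible as a sum of at most $k$ elements of $S$, with repetition allowed. Since the group is abelian, such a sum is determined entirely by the multiplicity with which each generator occurs, so the number of distinct elements obtainable is at most the number of multisets of size at most $k$ drawn from a $d$-element set, which is $\binom{d+k}{k}$ (equivalently, the number of multisets of size exactly $k$ from $d+1$ symbols, one of them a dummy). Therefore $n \leq \binom{d+k}{k}$, and so $DCC(d,k) \leq \binom{d+k}{k}$.

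Finally I would pass to the limit. We have $\binom{d+k}{k} = \frac{(d+1)(d+2)\cdots(d+k)}{k!}$, a polynomial in $d$ of degree $k$ with leading coefficient $1/k!$, so $\binom{d+k}{k}/d^k \to 1/k!$ as $d \to \infty$. Consequently $L^+_D(k) = \limsup_{d\to\infty} DCC(d,k)/d^k \leq \lim_{d\to\infty} \binom{d+k}{k}/d^k = 1/k!$, which completes the chain $L^+_C(k) \leq L^+_D(k) \leq 1/k!$.

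There is no real obstacle here; the one point worth flagging is that one must count multisets of generators rather than ordered walks. The cruder bound $n \leq 1 + d + d^2 + \cdots + d^k$ obtained by counting walks of length at most $k$ from the identity only gives $L^+_D(k) \leq 1$, and the decisive factor $1/k!$ genuinely comes from the commutativity of the group, which collapses the $k!$ orderings of each length-$k$ walk to a single group element.
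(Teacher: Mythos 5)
Your proof is correct and is exactly the standard argument the paper has in mind (the paper omits the proof, simply citing the survey of Miller and \v{S}ir\'a\v{n}): the containment $CC(d,k)\leq DCC(d,k)$ gives the first inequality, and the multiset count $n\leq\binom{d+k}{k}$, whose ratio to $d^k$ tends to $1/k!$, gives the second. Your closing remark correctly identifies the key point, namely that commutativity lets one count multisets of generators rather than walks, which is precisely where the factor $1/k!$ comes from.
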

For a lower bound, consider $\bbZ_{r^k}$ with generators $\{h r^\ell : |h|\leq \floor{\frac{r}{2}}\!,\, 0\leq \ell<k\}$:
\begin{observation}[Trivial lower bound]\label{obsTrivialLB}
  $L^-_D(k) \geq L^-_C(k) \geq \frac{1}{k^k}$.
\end{observation}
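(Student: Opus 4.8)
The plan is to prove the two inequalities separately, starting with the easy one. For $L^-_D(k)\geq L^-_C(k)$ it suffices to observe that an undirected Cayley graph $\Cay(G,S)$, where $S=-S$, is in particular a directed Cayley graph with the same order, the same degree $|S|$, and the same diameter (distances being symmetric). Hence $DCC(d,k)\geq CC(d,k)$ for every $d$ and $k$, and dividing by $d^k$ and taking $\liminf$ over $d$ gives the inequality.

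For $L^-_C(k)\geq 1/k^k$ I would make the construction suggested in the statement precise. Given a parameter $r\geq 2$, put $G=\bbZ_{r^k}$ and
\[
S \;=\; \bigl\{\pm h\+r^{\ell} \;:\; 1\leq h\leq\floor{r/2},\ 0\leq\ell<k\bigr\},
\]
so that $0\notin S=-S$ and $|S|\leq 2k\floor{r/2}\leq kr$; thus $\Cay(G,S)$ is an undirected circulant graph of order $r^k$ and degree at most $kr$. To bound its diameter I would use the balanced base-$r$ expansion: every residue modulo $r^k$ can be written as $\sum_{\ell=0}^{k-1}a_{\ell}\+r^{\ell}$ with each $|a_{\ell}|\leq\floor{r/2}$ (obtained from the ordinary base-$r$ digits by carrying whenever a digit exceeds $r/2$), and discarding the zero digits exhibits the element as a sum of at most $k$ members of $S$. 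By the diameter criterion in the bulleted list, $\Cay(G,S)$ therefore has diameter at most $k$, so $CC(|S|,k)\geq r^k$.

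It remains to promote this to a statement about \emph{every} large degree, since the $\liminf$ ranges over all $d$ and not merely over a subsequence. For this I would invoke the elementary fact that enlarging the connection set cannot increase the diameter (the graph only gains edges). Given $d$, I would choose $r\sim d/k$ as large as possible subject to $|S|\leq d$, taking $r$ odd when $d$ is even and $r$ even when $d$ is odd so that $d-|S|$ is even; then $\Cay(\bbZ_{r^k},S)$ can be padded up to degree exactly $d$ by adjoining $\pm$-pairs of fresh generators, keeping the order equal to $r^k$ and the diameter at most $k$. Since $r\geq d/k-O(1)$, this gives $CC(d,k)/d^k\geq(r/d)^k\geq(1/k-o(1))^k$, and letting $d\to\infty$ yields $L^-_C(k)\geq 1/k^k$.

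The argument is routine throughout, and I do not expect a genuine obstacle; the only mildly delicate point is the last paragraph, namely checking that the explicit family can be padded to all sufficiently large degrees of either parity while preserving both the order $r^k$ and the diameter bound, which is exactly what makes the $\liminf$ (rather than merely a $\limsup$ along a subsequence) at least $1/k^k$.
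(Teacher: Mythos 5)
Your proposal is correct and is essentially the paper's own argument: the paper merely sketches the construction ($\bbZ_{r^k}$ with generators $h r^\ell$, $|h|\leq\floor{r/2}$, $0\leq\ell<k$, i.e.\ balanced base-$r$ digits), and you flesh out exactly this, together with the routine observations that a symmetric connection set also settles the directed case and that padding with extra $\pm$-pairs of generators covers all sufficiently large degrees so the bound holds for the $\liminf$. No discrepancy with the paper's approach.
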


In this paper, we present constructions which yield, for each $k\geq2$, lower bounds on $L^-_C(k)$ and $L^-_D(k)$ that are greater than the trivial $1/k^k$ bound.
No such bounds were known previously.
Our results include the following (see Corollary~\ref{corOrders}):
\begin{bullets}
  \item For any diameter $k\geq2$ and any degree $d$ large enough, $CC(d,k)>\left(1.14775\+\frac{d}{k}\right)^k$.
  \item For any diameter $k$ that is a multiple of 5 or sufficiently large, and any degree $d$ large enough, $CC(d,k)>\left(1.20431\+\frac{d}{k}\right)^k$.
  \item For any diameter $k\geq2$ and any degree $d$ large enough, $DCC(d,k)>\left(1.22474\+\frac{d}{k}\right)^k$.
  \item For any diameter $k$ that is a multiple of 6 or sufficiently large, and any degree $d$ large enough, $DCC(d,k)>\left(1.27378\+\frac{d}{k}\right)^k$.
\end{bullets}
We also
deduce a result concerning sumsets covering $\mathbb{Z}_n$,
and use our techniques to construct a revised
table of the largest known circulant graphs of small degree and diameter.

For larger diameters, the trivial bounds become numerically small, and the ratio between the upper and lower bound becomes arbitrarily large.
Therefore, in order more easily to assess the success of our constructions, we make use of the following measure which records improvement over the trivial lower bound.

Let $R^-_C(k) = k\+L^-_C(k)^{1/k}$, and define $R^+_C(k)$, $R^-_D(k)$ and $R^+_D(k)$ analogously.
Thus, $R^-_C(k)\geq1$, with equality if the trivial lower bound is approached asymptotically for large degrees. For each $k$, these $R$ values thus provide a useful indication of the success of our constructions in exceeding the trivial lower bound.
In Section~\ref{sectStitching}, we show how to construct a cyclic Cayley graph from two smaller ones in such a way that the $R$ values are preserved.

The $R$ values are bounded above by $R_{\max}(k)=k(k!)^{-1/k}$. Using the asymptotic version of Stirling's approximation, $\log k!\sim k\log k-k$, we see that as the diameter tends to infinity,
\[
1 \;\leq\; \liminfinfty[k]R^-_C(k) \;\leq\; \liminfinfty[k]R^+_C(k) \;\leq\; e ,
\]
and similarly for $R^-_D(k)$ and $R^+_D(k)$.

In the next section, we extend a result of Vetr\'ik~\cite{Vetrik2014} to deduce new lower bounds for $L^-_C(2)$ and $R^-_C(2)$.
In Section~\ref{sectDirectProd}, we describe a direct product construction and use it to build large cyclic Cayley graphs 
of small diameter and arbitrarily large degree.
We also prove that this construction is unable to yield values that exceed the trivial lower bound for large diameter.
However, in Section~\ref{sectStitching}, we demonstrate a method of building a circulant graph by ``stitching'' together
two smaller ones, and show how the application of this method to the constructions from Section~\ref{sectDirectProd} enables us to exceed the trivial lower bound for every diameter.

Section~\ref{sectSumsets} contains an application of our constructions
to obtain upper bounds on the minimum size of a set $A\subseteq\bbZ_n$
such that the $k$-fold sumset $kA$ is equal to $\bbZ_n$.
We conclude, in Section~\ref{sectTables}, by presenting a revised table of the largest known circulant graphs of small degree and diameter, including a number of new largest orders resulting from our constructions.

\section{Diameter 2 bounds for all large degrees}
Much of the study of this problem to date has concentrated on the diameter 2 undirected case.
In this instance, the trivial lower bound on $L^-_C(2)$ is 1/4 and the trivial upper bound on $L^+_C(2)$ is 1/2.
Vetr\'ik~\cite{Vetrik2014} (building on Macbeth, {\v{S}}iagiov{\'a} \& {\v{S}}ir{\'a}{\v{n}}~\cite{macbeth2012cayley})
presents a construction that proves that
$L^+_C(2) \geq \frac{13}{36}\approx 0.36111$, and thus $R^+_C(2)>1.20185$.

In this section, we begin by extending this result to yield bounds for $L^-_C(2)$ and $R^-_C(2)$.
This argument can also be found in Lewis~\cite{Lewis2015}. We reproduce it here for completeness, since we make use of the resulting bounds below.

Vetr\'ik's theorem applies only to values of the degree $d$ of the form $6p-2$, where $p$ is a prime such that $p\neq 13,p\nequiv 1\pmod{13}$.
We extend this result to give a slightly weaker bound valid for all sufficiently large values of $d$.
The strategy is as follows:
\begin{bullets}
\item Given a value of $d$, we select the largest prime $p$ in the allowable congruence classes such that $6p-2\leq d$.
\item We construct the graph of Vetr\'ik~\cite{Vetrik2014} using this value of $p$.
\item We add generators to the Vetr\'ik construction (and hence edges to the Cayley graph) to obtain a new graph of degree $d$ which still has diameter 2.
\end{bullets}
Note that the graphs in the Vetr\'ik construction always have even order and hence we may obtain an odd degree $d$ by adding the unique element of order 2
to the generator set.

Success of this method relies on being able to find a prime $p$ sufficiently close to the optimal value so that we need only add asymptotically few edges
to our graph. We use recent results of Cullinan \& Hajir~\cite{Cullinan2012} following Ramar{\'e} \& Rumely~\cite{Ramare1996}.

\begin{lemma}[Cullinan \& Hajir~\cite{Cullinan2012}, Ramar{\'e} \& Rumely~\cite{Ramare1996}]\label{thmCHPrimeGaps}
Let $\delta=0.004049$.
For any $x_0\geq 10^{100}$ there exists a prime $p\equiv 2\pmod{13}$ in the interval $[x_0,x_0+\delta x_0]$.
\end{lemma}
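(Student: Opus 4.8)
The plan is to derive this from an explicit form of the prime number theorem for the residue class $2$ modulo $13$. For coprime $a,q$ write $\theta(x;q,a)$ for the sum of $\log p$ over primes $p\le x$ with $p\equiv a\pmod{q}$. The estimates of Ramar\'e \& Rumely~\cite{Ramare1996}, sharpened by Cullinan \& Hajir~\cite{Cullinan2012} (who incorporate more extensive rigorous computations of the low-lying zeros of the Dirichlet $L$-functions attached to the characters modulo $13$), furnish an explicit constant $\varepsilon>0$ such that
\[
  \left|\theta(x;13,a)-\tfrac{x}{12}\right| \;<\; \varepsilon\,\tfrac{x}{12}
\]
holds for all $x\ge 10^{100}$ and all $a$ coprime to $13$. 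The one numerical fact I shall need is that this tabulated $\varepsilon$ satisfies $\varepsilon(2+\delta)<\delta$, equivalently $\varepsilon<\delta/(2+\delta)\approx 0.00202$; the value $\delta=0.004049$ in the statement is chosen precisely so that this inequality holds.

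Granting that, fix $x_0\ge 10^{100}$ and apply the displayed bound at the two endpoints of the interval $[x_0,(1+\delta)x_0]$:
\[
  \theta\bigl((1+\delta)x_0;13,2\bigr)-\theta\bigl(x_0;13,2\bigr)
  \;>\; \frac{(1+\delta)x_0}{12}(1-\varepsilon)-\frac{x_0}{12}(1+\varepsilon)
  \;=\; \frac{x_0}{12}\bigl(\delta-\varepsilon(2+\delta)\bigr)\;>\;0.
\]
Now $\theta(\cdot;13,2)$ is a step function that increases, by a positive jump $\log p$, exactly at each prime $p\equiv 2\pmod{13}$; a strictly positive increment over $[x_0,(1+\delta)x_0]$ therefore forces at least one such prime $p$ with $x_0<p\le(1+\delta)x_0$, hence $p\in[x_0,x_0+\delta x_0]$. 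It is harmless here to work with the Chebyshev function $\psi$ instead of $\theta$, since their difference is $O(x^{1/2}\log x)$, and equally harmless to take the interval closed rather than half-open.

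The only real obstacle is producing the explicit constant $\varepsilon$; everything after that is the two-line comparison above. That input is genuinely deep: it rests on an effective zero-free region for $L(s,\chi)$ with $\chi$ a Dirichlet character modulo $13$, a rigorous numerical verification that all zeros of these $L$-functions up to a large height lie on the critical line, and an explicit smoothed-explicit-formula estimate bounding $\psi(x;q,a)-x/\phi(q)$ by a sum over zeros together with controlled truncation errors. Assembling these ingredients is precisely the work carried out in~\cite{Ramare1996} and~\cite{Cullinan2012}, so in practice the proof reduces to quoting their bound for the modulus $13$, restricting to the range $x\ge 10^{100}$, and checking the single inequality $\varepsilon<\delta/(2+\delta)$ for $\delta=0.004049$.
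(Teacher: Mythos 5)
Your argument is correct and is essentially the paper's own: the paper likewise quotes the Ramar\'e--Rumely table value $\epsilon=0.002020$ for modulus $13$ and $x_0=10^{100}$ and then invokes the Cullinan--Hajir criterion $\delta>\frac{2\epsilon}{1-\epsilon}$, which is algebraically identical to your condition $\varepsilon(2+\delta)<\delta$. Your two-endpoint comparison of $\theta(\cdot\,;13,2)$ merely unpacks what is inside Cullinan--Hajir's Theorem~1, so the two proofs coincide in substance.
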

\begin{proof}
We use the method of Cullinan and Hajir~\cite[Theorem~1]{Cullinan2012}. 
This method begins by using the tables of Ramar{\'e} and Rumely~\cite{Ramare1996} to find a value $\epsilon$ corresponding to $k=13,a=2,x_0=10^{100}$.
Following the proof of Cullinan and Hajir~\cite[Theorem~1]{Cullinan2012}, if $\delta>\frac{2\epsilon}{1-\epsilon}$ it follows that there must exist a 
prime $p\equiv 2\pmod{13}$ in the interval $[x_0,x_0+\delta x_0]$.
From Table 1, Ramar{\'e} and Rumely~\cite{Ramare1996} we find $\epsilon=0.002020$ and hence $\delta=0.004049$ will suffice.
\end{proof}

Our improved bound for circulant graphs of diameter 2 follows:
\begin{theorem}[{see~\cite[Theorem~6]{Lewis2015}}]\label{thmVetrikPrimeGaps}
$L^-_C(2) > 0.35820$, and hence $R^-_C(2) > 1.19700$.
\end{theorem}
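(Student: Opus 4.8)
The plan is to turn the three-step strategy sketched above into a quantitative estimate. First I recall precisely what Vetr\'ik's construction~\cite{Vetrik2014} supplies: for every prime $p\neq13$ with $p\nequiv1\pmod{13}$ it produces an undirected circulant graph $\Gamma_p$ of even order $n_p$, degree $6p-2$ and diameter~$2$, with $n_p/(6p-2)^2\to\tfrac{13}{36}$ as $p\to\infty$ (this limit is exactly the content of the bound $L^+_C(2)\ge\tfrac{13}{36}$, since the construction realises only degrees of the form $6p-2$). Given a degree $d$ large enough that $\floor{(d+2)/(6(1+\delta))}\ge10^{100}$, I apply Lemma~\ref{thmCHPrimeGaps} with $x_0=\floor{(d+2)/(6(1+\delta))}$ to obtain a prime $p\equiv2\pmod{13}$ — which in particular satisfies Vetr\'ik's hypotheses — lying in $[x_0,(1+\delta)x_0]$. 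From $(1+\delta)x_0\le(d+2)/6$ we get $6p-2\le d$, and from $p\ge x_0>(d+2)/(6(1+\delta))-1$ we get $6p-2>\tfrac{d}{1+\delta}-8$.

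Next I enlarge $\Gamma_p$ to a circulant graph of degree exactly $d$ without increasing the diameter. Adjoining generators can only shorten distances, so any circulant graph on $\bbZ_{n_p}$ whose connection set contains that of $\Gamma_p$ still has diameter at most~$2$; it therefore suffices to add $d-(6p-2)$ further nonzero elements, closed under negation. If $d-(6p-2)$ is even I adjoin $\tfrac12\bigl(d-(6p-2)\bigr)$ inverse pairs $\{s,-s\}$ of distinct new elements; if it is odd I additionally include the unique element of order~$2$, which exists because $n_p$ is even. There is ample room: $n_p$ grows like $\tfrac{13}{36}d^2$, while in total only $6p-2+O(\delta d)=O(d)$ elements are used. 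The outcome is an undirected circulant graph of order $n_p$, degree $d$ and diameter at most~$2$, whence $CC(d,2)\ge n_p$.

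Finally I assemble the estimate. For all sufficiently large $d$,
\[
\frac{CC(d,2)}{d^2}\;\ge\;\frac{n_p}{d^2}\;=\;\frac{n_p}{(6p-2)^2}\cdot\Bigl(\frac{6p-2}{d}\Bigr)^{2}\;\ge\;\frac{n_p}{(6p-2)^2}\cdot\Bigl(\frac{1}{1+\delta}-\frac{8}{d}\Bigr)^{2},
\]
and letting $d\to\infty$ (so $p\to\infty$ and $n_p/(6p-2)^2\to\tfrac{13}{36}$) yields $L^-_C(2)\ge\dfrac{13}{36(1+\delta)^2}$. With $\delta=0.004049$ this constant exceeds $0.35820$, and $R^-_C(2)=2\,L^-_C(2)^{1/2}\ge\dfrac{\sqrt{13}}{3(1+\delta)}>1.19700$.

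The individual steps are routine; the delicate point is the arithmetic of the last paragraph, since the factor $(1+\delta)^{-2}$ lost to the prime gap makes the bound on $R^-_C(2)$ very tight — it is essential to work with the exact constant $\tfrac{13}{36(1+\delta)^2}$ rather than the rounded value $0.35820$ when deriving the bound on $R^-_C(2)$. One should also confirm that the prime furnished by Lemma~\ref{thmCHPrimeGaps} meets all of Vetr\'ik's congruence conditions ($p\ne13$ and $p\nequiv1\pmod{13}$, both immediate from $p\equiv2\pmod{13}$ and $p\ge10^{100}$), and that enough fresh inverse pairs remain available to reach degree exactly $d$ for odd as well as even~$d$.
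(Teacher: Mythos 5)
Your proposal is correct and follows essentially the same route as the paper: use the Cullinan--Hajir/Ramar\'e--Rumely lemma to find a prime $p\equiv2\pmod{13}$ with $6p-2$ just below $d$, apply Vetr\'ik's construction, pad with extra generators (using the order-$2$ element for parity) to reach degree exactly $d$, and conclude $L^-_C(2)\ge\frac{13}{36(1+\delta)^2}>0.35820$. Your write-up just spells out the parity bookkeeping and the final arithmetic a bit more explicitly than the paper does.
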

\begin{proof}
Let $\delta=0.004049$ and let $d>10^{101}$. We seek the largest prime $p\equiv 2\pmod{13}$ such that $6p-2\leq d$.
By the result of Lemma~\ref{thmCHPrimeGaps}, there exists such a $p$ with $p\geq (d+2)/6(1+\delta)$.
Let $d'=6p-2$. Then by the result of Vetr\'ik~\cite{Vetrik2014} we can construct a circulant graph of degree $d'$, diameter 2 and order $n=\frac{13}{36}(d'+2)(d'-4)$.
We can add $d-d'$ generators to this construction to obtain a graph of degree $d$ and diameter 2, with the same order $n$.

Since $n=\frac{13}{36}d^2/(1+\delta)^2+O(d)\approx 0.358204 d^2+O(d)$ the result follows.
\end{proof}

\section{Direct product constructions for small diameters}\label{sectDirectProd}
In this section, we construct large undirected circulant graphs of diameters $k=3, 4, 5$ and arbitrary large degree.
We also construct large directed circulant graphs of diameters $k=2,\ldots,9$ and arbitrary large degree.
We then prove that the approach used is insufficient to yield values that exceed the trivial lower bound for large diameter.

\subsection{Preliminaries}
The diameter 2 constructions of Macbeth, {\v{S}}iagiov{\'a} \& {\v{S}}ir{\'a}{\v{n}} and of Vetr\'ik both have the form
$F_p^+\times F_p^*\times\bbZ_w$ for some fixed $w$ and variable $p$,
where $F_p^+$ and $F_p^*$ are the additive and multiplicative groups of the Galois field $GF(p)$.
Thus the first two components of their constructions are very tightly coupled, and this coupling is a key to their success.
However, a significant limitation of this method is that it is only applicable in the diameter 2 case.

In contrast, the constructions considered here have components that are as loosely coupled as possible.
For diameter $k$, they have the form $\bbZ_{r_1}\times\bbZ_{r_2}\times\ldots\times\bbZ_{r_k}\times\bbZ_w$ for some fixed $w$ and variable pairwise coprime $r_i$.
This gives us greater flexibility, especially in terms of the diameters we can achieve.
The price for this is that we lose the inherent structure of the finite field, which consequently places limits on the bounds we can achieve.

The constructions in this section make use of the following result concerning the representation of each element of the cyclic group $\bbT=\bbZ_r\times\bbZ_s$ ($r$ and $s$ coprime)
as the sum of a small multiple of the element $(1,1)$ and a small multiple of another element $(u,v)$.
It can be helpful to envisage $\bbT$ as a group of vectors on the $r\times s$ discrete torus.
\begin{lemma}
\label{lemmaFairyChess}
Let $u$, $d$, $s$ and $m$ be positive integers with $s>1$ and coprime to $m d$.
Let $v=u+d$.
Suppose $s\geq m v(u-1)$.
Then, for every element $(x,y)$ of $\bbT=\bbZ_{s+m d}\times\bbZ_s$,
there exist nonnegative integers $h<s+m v$ and $\ell<s-m(u-1)$
such that $(x,y)=h(1,1)+\ell(u,v)$.
\end{lemma}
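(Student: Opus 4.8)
The plan is to pass to the cyclic description of $\bbT$. Since $\gcd(s+md,\,s)=\gcd(md,s)=1$, the Chinese Remainder Theorem identifies $\bbT$ with $\bbZ_n$, where $n=s(s+md)=|\bbT|$, in such a way that $(1,1)\mapsto 1$ and $(u,v)\mapsto N$, the unique residue modulo $n$ with $N\equiv u\pmod{s+md}$ and $N\equiv v\pmod s$. Writing $H=s+mv$ and $L=s-m(u-1)$, the assertion to be proved becomes: the $L$ arcs $\{\ell N,\ell N+1,\dots,\ell N+H-1\}$ (residues mod $n$), for $0\le\ell<L$, cover $\bbZ_n$. Equivalently, for every $w\in\bbZ_n$ there is some $\ell$ with $0\le\ell<L$ and $(w-\ell N)\bmod n<H$, in which case $h:=(w-\ell N)\bmod n$ is the sought first coefficient.

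The driver of the argument is the identity $mN\equiv H\pmod n$, which is immediate from the two congruences defining $N$ (modulo $s+md$: $mN\equiv mu\equiv s+mu+md=H$; modulo $s$: $mN\equiv mv\equiv s+mv=H$), together with $0\le H<n$, which one checks holds under the hypotheses. Geometrically this says the arc of index $\ell+m$ begins exactly where the arc of index $\ell$ ends. Hence the $L$ arcs group, according to $\ell\bmod m$, into $m$ \emph{chains} of consecutive abutting arcs: the chain of residue $p$ ($0\le p<m$) consists of $c_p$ arcs and is therefore a single arc of length $c_pH$ with left endpoint $pN$, where $c_p=\ceil{L/m}$ if $p<L\bmod m$ and $c_p=\floor{L/m}$ otherwise. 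Here $\sum_{p}c_pH=LH=n+m\bigl(s-mv(u-1)\bigr)\ge n$, the surplus being precisely the slack in the hypothesis. So it suffices to show that these $m$ arcs cover $\bbZ_n$, i.e.\ that the cyclic gap immediately after each point $pN$ is at most $c_pH$.

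If $m=1$ this is immediate: $N=H$, the arcs tile $\bbZ_n$ from $0$, and $LH\ge n$ finishes it. If $m\ge 2$ and $u\ge 2$, the hypothesis forces $s\ge mv(u-1)\ge 3m$, which makes $s$ large enough (in particular $(m-1)H<n$ and $mH<n$) that the chain endpoints $\{pN\bmod n\}$ can be listed in increasing order explicitly. A short computation then shows the cyclic gaps take only the two values $\tfrac1m(n+\tau H)$ and $\tfrac1m\bigl(n-(m-\tau)H\bigr)$, where $\tau$ is the inverse modulo $m$ of the quotient $t$ defined by $mN=H+tn$; a second short computation gives $\tau=m-(L\bmod m)$, coming from $t\equiv-s^{-1}$ and $L\equiv s\pmod m$ (this is the degenerate, exact two-gap case of the three-distance theorem). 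One then has $c_pH=\tfrac1m(n+\tau H)+\bigl(s-mv(u-1)\bigr)$ for $p<L\bmod m$ and $c_pH=\tfrac1m\bigl(n-(m-\tau)H\bigr)+\bigl(s-mv(u-1)\bigr)$ for $p\ge L\bmod m$, while the larger gap occurs precisely after those endpoints $pN$ with $p<L\bmod m$; since $s-mv(u-1)\ge0$, every gap is at most the length of the chain starting there. The main obstacle is exactly this bookkeeping — getting the sorted order of the endpoints right and matching the longer gaps to the longer chains — and it is where the slack $s-mv(u-1)\ge0$ is consumed.

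The remaining case $u=1$ (where $L=s$, the hypothesis is vacuous, and $s$ need not dominate $m$) is handled by an elementary direct construction instead. Given $(x,y)\in\bbT$ with $0\le x<s+md$ and $0\le y<s$, let $\ell_0$ be the residue of $d^{-1}(y-x)$ modulo $s$ with $0\le\ell_0<s$. If $x\ge\ell_0$, take $\ell=\ell_0$ and $h=x-\ell_0$; then $0\le h<s+md\le H$. If $x<\ell_0$, take $\ell=(\ell_0-m)\bmod s$ and $h=x+(s+md)-\ell$. In either case a direct verification — splitting, in the second case, on whether $\ell_0\ge m$ — confirms $0\le h<H$, $0\le\ell<L$, and $h(1,1)+\ell(u,v)=(x,y)$, which completes the proof.
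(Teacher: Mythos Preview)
Your approach via CRT and grouping the $L$ arcs into $m$ chains indexed by $\ell\bmod m$ is sound, and the arithmetic (the identity $mN\equiv H$, the chain lengths $c_pH$, the total $LH=n+m(s-mv(u-1))$, and the direct $u=1$ construction) all checks out. However, in the main case $m\ge2$, $u\ge2$, the assertion that ``the larger gap occurs precisely after those endpoints $pN$ with $p<L\bmod m$'' is not justified by your three-distance reference: that theorem gives the gap \emph{values}, not which index $p$ sees which gap, and this matching is the crux of your covering argument. It \emph{can} be filled in: from $\tau N\equiv(n+\tau H)/m$ and $(\tau-m)N\equiv(n-(m-\tau)H)/m\pmod n$ one sees that the gap after $pN$ is at most the ``big'' value when $p+\tau<m$ (witnessed by $(p+\tau)N$) and at most the ``small'' value when $p\ge m-\tau$ (witnessed by $(p-(m-\tau))N$); since these $m$ upper bounds sum to exactly $n$, all inequalities are equalities. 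But this step is essential and should be included.

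By contrast, the paper's proof avoids chains and gap analysis entirely. It simply checks, for each $\ell\in[0,t)$ with $t=s-m(u-1)$, that some $\ell'(u,v)$ with $\ell'\in[0,t)$ occurs within $H=s+mv$ steps of $(1,1)$ after $\ell(u,v)$: if $\ell<t-m$ take $\ell'=\ell+m$ and $h'=s+mv$; if $\ell\ge t-m$ take $\ell'=\ell+m-t$ and $h'=muv$, where $muv\le s+mv$ is exactly the hypothesis $s\ge mv(u-1)$. Since $(1,1)$ generates $\bbT$, this two-line case split already gives the covering. Your chain decomposition is effectively a repackaging of this same dichotomy (indeed your ``abutting arcs'' encode the first case, and the transition between chains encodes the second), but arranged so that the final matching step requires the extra argument above. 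The paper's route is considerably shorter; yours buys a more structural picture of how the arcs sit in $\bbZ_n$, at the cost of that additional bookkeeping.
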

Observe that the construction ensures that $(s+mv)(1,1)=m(u,v)$.
Figure~\ref{figFairyChess} illustrates the case with parameters $u=2$, $v=5$, $s=11$, $m=2$.
\begin{figure}[ht]
$$
  \begin{tikzpicture}[scale=0.42]
    \draw [red!50!gray,ultra thick] (1,1) -- (3,6);
    \draw [red!30!white,ultra thick] (3,6) -- (5.4,12);
    \draw [red!30!white,ultra thick] (5.4,1) -- (9.8,12);
    \draw [red!30!white,ultra thick] (9.8,1) -- (14.2,12);
    \draw [red!30!white,ultra thick] (14.2,1) -- (17,8);
    \draw [blue!50!black,ultra thick] (1,1) -- (2,2);
    \draw [blue!30!white,ultra thick] (2,2) -- (12,12);
    \draw [blue!30!white,ultra thick] (12,1) -- (18,7);
    \draw [blue!30!white,ultra thick] (1,7) -- (4,10);
    \plotgrid{17}{11}
    \foreach \x in {1,...,11} \fill[blue!50!black,radius=0.21] (\x,\x) circle;
    \foreach \x in {1,...,6} \fill[blue!50!black,radius=0.21] (\x+11,\x) circle;
    \foreach \x in {1,...,4} \fill[blue!50!black,radius=0.21] (\x,\x+6) circle;
    \draw[red!50!black,radius=0.2,very thick] (1,1) circle;
    \draw[red!50!black,radius=0.2,very thick] (3,6) circle;
    \draw[red!50!black,radius=0.2,very thick] (5,11) circle;
    \draw[red!50!black,radius=0.2,very thick] (7,5) circle;
    \draw[red!50!black,radius=0.2,very thick] (9,10) circle;
    \draw[red!50!black,radius=0.2,very thick] (11,4) circle;
    \draw[red!50!black,radius=0.2,very thick] (13,9) circle;
    \draw[red!50!black,radius=0.2,very thick] (15,3) circle;
    \draw[red!50!black,radius=0.2,very thick] (17,8) circle;
  \end{tikzpicture}
$$
\caption{Every element of $\bbZ_{17}\times\bbZ_{11}$ is the sum of one of the 21 solid elements and one of the 9 circled elements.}\label{figFairyChess}
\end{figure}
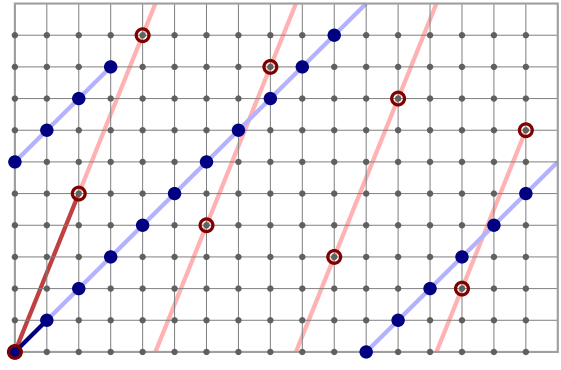
\begin{proof}
Let $t=s-m(u-1)$.
Since $s$ is coprime to $m d$, $(1,1)$ generates $\bbT$.
Hence, it suffices to show that, in the list $(0,0),\,(1,1),\,(2,2),\,\ldots$,
the gaps between members of $\{\ell(u,v):0\leq\ell<t\}$
are not ``too large''.

Specifically, we need to show that, for each nonnegative $\ell<t$, there is some
positive $h'\leq s+m v$ and nonnegative $\ell'<t$ such that $\ell(u,v)+h'(1,1)=\ell'(u,v)$.

There are two cases.
If $\ell<t-m$, then we can take $h'=s+m v$ and $\ell'=\ell+m$:
$$
\begin{array}{rcll}
  \ell(u,v) \:+\: (s+m v)(1,1) & = & (\ell u+s+m u+m d,         & \!\!\! \ell v+s+m v) \\[2pt]
                               & = & (\ell u\phantom{{}+s}+m u, & \!\!\! \ell v\phantom{{}+s}+m v) \\[2pt]
                               & = & (\ell+m)(u,v) .
\end{array}
$$
If $\ell\geq t-m$, then we can take $h'=m u v$ and $\ell'=\ell+m-t=\ell+m u-s$:
$$
\begin{array}{rcll}
  \ell(u,v) \:+\: m u v(1,1) & = & (\ell u+m u^2+m u d,            & \!\!\! \ell v+m u v) \\[2pt]
                             & = & (\ell u+m u^2+m u d - u(s+m d), & \!\!\! \ell v+m u v - v s) \\[2pt]
                             & = & (\ell+m u-s)(u,v) .
\end{array}
$$
The requirement that $m u v\leq s+m v$ is clearly equivalent to the condition on $s$ in the statement of the lemma.
\end{proof}

In our direct product constructions, we make use of Lemma~\ref{lemmaFairyChess} via the following crucial lemma.
Our strategy is to construct a cyclic group of the form $\bbT=\bbZ_{r_1}\times\bbZ_{r_2}\times\ldots\times\bbZ_{r_k}$ such that $r_1>r_2>\ldots>r_k>1$,
and for each pair $\bbZ_{r_i}\times\bbZ_{r_j}$ for $i<j$ we will bring Lemma~\ref{lemmaFairyChess} to bear.
In the notation of that lemma we will set $u=i$, $d=j-i$, $s=r_j$, $s+md=r_i$. 
The conditions of Lemma~\ref{lemmaFairyChessCor} below are designed to ensure that for each pair $i,j$ we can find $m=m_{i,j}$ to make this work.

\begin{lemma}\label{lemmaFairyChessCor}
  Let $k>1$ and let $r_1>r_2>\ldots>r_k$ be pairwise coprime integers greater than 1 such that $r_i$ is coprime to $i$ for $1\leq i\leq k$.
	Suppose that for each $i,j$ with $1\leq i<j\leq k$ there exists a positive integer $m_{i,j}$ such that:
  \begin{bullets}
    \item $r_i-r_j= m_{i,j}(j-i)$ 
    \item $r_j\geq m_{i,j}(i-1)\+j$.
  \end{bullets}
	
  Let $\bbT=\bbZ_{r_1}\times\bbZ_{r_2}\times\ldots\times\bbZ_{r_k}$.
	Let $\mathbf{o}=(1,1,\ldots,1)$, $\mathbf{u}=(1,2,\ldots,k)$
  and, for each $i$, $\mathbf{e}_i=(0,\ldots,1,\ldots,0)$
  be elements of $\bbT$,
  where only the $i$th coordinate of $\mathbf{e}_i$ is 1,
  and let
  the set $A$ consist of these $k+2$ elements.

  Let $c_\mathbf{o}=\max\limits_{i<j} (r_j+j\+m_{i,j})$, $c_\mathbf{u}=r_1$,
  and for each $i$, $c_{\mathbf{e}_i}=r_i$.

  Then, for every element $\mathbf{x}$ of $\bbT$ and every $k$-element subset $S$ of $A$, there exist nonnegative integers $h_\mathbf{s}<c_\mathbf{s}$ for each
  $\mathbf{s}\in S$, such that $\mathbf{x} = \sum_\mathbf{s} h_\mathbf{s}\+\mathbf{s}$.
\end{lemma}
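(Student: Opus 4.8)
Since $|A| = k+2$ and $|S| = k$, exactly two elements of $A$ are omitted from $S$, and the plan is to argue by cases according to which two. In every case I would first fix the coefficients $h_\mathbf{o}$ and/or $h_\mathbf{u}$ attached to whichever of $\mathbf{o},\mathbf{u}$ lie in $S$, looking only at one or two coordinates, and then observe that every coordinate $a$ for which $\mathbf{e}_a\in S$ carries a dedicated generator affecting only that coordinate: once everything else is decided, we may simply take $h_{\mathbf{e}_a}$ to be the unique residue in $\{0,1,\ldots,r_a-1\}$ making the $a$th coordinate of $\sum_{\mathbf{s}\in S} h_\mathbf{s}\mathbf{s}$ equal to $x_a$, and this automatically satisfies $h_{\mathbf{e}_a}<r_a=c_{\mathbf{e}_a}$. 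Thus the whole argument reduces to handling $h_\mathbf{o}$ and $h_\mathbf{u}$ in the (at most two) coordinates whose unit vector is missing.

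There are three routine cases. If $\mathbf{o}$ and $\mathbf{u}$ are both omitted, then $S=\{\mathbf{e}_1,\ldots,\mathbf{e}_k\}$ and $\mathbf{x}=\sum_a x_a\mathbf{e}_a$ works with $x_a<r_a$. If $\mathbf{o}$ and some $\mathbf{e}_i$ are omitted, then the $i$th coordinate forces $i\,h_\mathbf{u}\equiv x_i\pmod{r_i}$; since $\gcd(i,r_i)=1$ this has a solution $h_\mathbf{u}\in\{0,\ldots,r_i-1\}$, and $h_\mathbf{u}<r_i\le r_1=c_\mathbf{u}$; the remaining coordinates are then filled greedily. If $\mathbf{u}$ and some $\mathbf{e}_i$ are omitted, the $i$th coordinate forces $h_\mathbf{o}\equiv x_i\pmod{r_i}$; pick $h_\mathbf{o}\in\{0,\ldots,r_i-1\}$ and note that (using $m_{1,2}=r_1-r_2$ from the first bullet) $c_\mathbf{o}\ge r_2+2\+m_{1,2}=2r_1-r_2>r_1\ge r_i$, so the bound $h_\mathbf{o}<c_\mathbf{o}$ holds; again fill the rest greedily.

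The main case, and the only place the hypotheses really bite, is when two unit vectors $\mathbf{e}_i$ and $\mathbf{e}_j$ with $i<j$ are omitted, so $S=\{\mathbf{o},\mathbf{u}\}\cup\{\mathbf{e}_a:a\ne i,j\}$. Project onto coordinates $i$ and $j$: there $\mathbf{o}$ and $\mathbf{u}$ restrict to $(1,1)$ and $(i,j)$ in $\mathbb{Z}_{r_i}\times\mathbb{Z}_{r_j}$, and I would invoke Lemma~\ref{lemmaFairyChess} with $u=i$, $d=j-i$, $v=j$, $m=m_{i,j}$ and $s=r_j$, noting $s+md=r_j+m_{i,j}(j-i)=r_i$ by the first bullet and $r_i>r_j$ since $i<j$. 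The hypotheses of that lemma are met: $s=r_j>1$; the bound $s\ge mv(u-1)$ is precisely the second bullet $r_j\ge m_{i,j}(i-1)\+j$; and $s$ is coprime to $md$ because $\gcd(r_j,m_{i,j}(j-i))=\gcd(r_j,r_i-r_j)=\gcd(r_j,r_i)=1$ by pairwise coprimality. The lemma then yields nonnegative $h<r_j+m_{i,j}\+j$ and $\ell<r_j-m_{i,j}(i-1)$ with $(x_i,x_j)=h(1,1)+\ell(i,j)$; setting $h_\mathbf{o}=h$ and $h_\mathbf{u}=\ell$ gives $h_\mathbf{o}<r_j+j\+m_{i,j}\le c_\mathbf{o}$ and $h_\mathbf{u}<r_j\le r_1=c_\mathbf{u}$, and the remaining coordinates $a\notin\{i,j\}$ are filled greedily as above. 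Combining the four cases completes the proof.

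I expect no serious obstacle here: the substance is carried entirely by Lemma~\ref{lemmaFairyChess}, and what is left is bookkeeping — matching the indices of that lemma to the pair $(i,j)$, observing that the coprimality condition $\gcd(s,md)=1$ needed by the lemma is \emph{not} assumed but follows from pairwise coprimality of the $r_a$, and checking that $c_\mathbf{o}$ is large enough (namely $c_\mathbf{o}>r_1$) to cover the case in which $\mathbf{u}$ is omitted. None of this requires genuine computation.
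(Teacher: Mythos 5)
Your proof is correct and takes essentially the same route as the paper: the same four-case split according to which two elements of $A$ are omitted, with the unit-vector coordinates filled greedily, the single-omission cases handled via the coprimality of $i$ and $r_i$ (respectively the observation that $c_\mathbf{o}\geq r_2+2m_{1,2}>r_1\geq r_i$), and the main case by applying Lemma~\ref{lemmaFairyChess} to $\bbZ_{r_i}\times\bbZ_{r_j}$ with $(u,v)=(i,j)$, $s=r_j$, $m=m_{i,j}$. Your explicit check that $\gcd(r_j,m_{i,j}(j-i))=\gcd(r_j,r_i)=1$ is a detail the paper leaves implicit, but otherwise the arguments coincide.
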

\begin{proof}
  There are four cases.
  If $S$ contains neither $\mathbf{o}$ nor $\mathbf{u}$, the result follows trivially.

  If $S$ contains $\mathbf{o}$ but not $\mathbf{u}$, omitting $\mathbf{e}_i$, then we can choose $h_\mathbf{o}$ to be the $i$th coordinate of $\mathbf{x}$.
	Note that, as required, $c_\mathbf{o}\geq r_2+2(r_1-r_2)=r_1+(r_1-r_2)> r_i$ for all $i$.

  If $S$ contains $\mathbf{u}$ but not $\mathbf{o}$, omitting $\mathbf{e}_i$, then, since $i$ and $r_i$ are coprime,
	we can choose $h_\mathbf{u}$ such that $i\+h_\mathbf{u}\!\!\pmod{r_i}$ is the $i$th coordinate of $\mathbf{x}$.

  Finally, if $S$ contains both $\mathbf{o}$ and $\mathbf{u}$, omitting $\mathbf{e}_i$ and $\mathbf{e}_j$, then
  we can choose $h_\mathbf{o}$ and $h_\mathbf{u}$
  by applying Lemma~\ref{lemmaFairyChess} to $\bbZ_{r_i}\times\bbZ_{r_j}$ with $(u,v)=(i,j)$.
\end{proof}
We note that the conditions of Lemma~\ref{lemmaFairyChessCor} imply that at most one of the $r_i$ can be even, and if $k\geq 4$ then all $r_i$ must be odd.

\subsection{Undirected constructions}
We can use Lemma~\ref{lemmaFairyChessCor} to construct undirected circulant graphs of any diameter by means of the following theorem.
\begin{theorem}\label{thmUndir}
Let $w$ and $k$ be positive integers and suppose that there exist sets $B$ and $T$ of positive integers with the following properties:
\begin{bullets}
	\item $B=\{b_1,\ldots,b_{k+2}\}$ has cardinality $k+2$ and the property that every element of $\bbZ_w$ can be expressed as the
sum of exactly $k$ distinct elements of $B\cup-B$, no two of which are inverses.
\item $T=\{r_1,r_2,\ldots,r_k\}$ has cardinality $k$ and the properties that all its elements are coprime to $w$, 
and satisfies the requirements of Lemma~\ref{lemmaFairyChessCor}, i.e. for each $i<j$:
	\begin{bullets}
        \item $r_i>r_j$
		\item $\gcd(r_i,r_j)=1$
		\item $\gcd(r_i,i)=1$
		\item There is a positive integer $m_{i,j}$ such that $r_i-r_j=m_{i,j}(j-i)$ and $r_j\geq m_{i,j}(i-1)j$.
	\end{bullets}
\end{bullets}

\vspace{1.5ex}
Let $c_\mathbf{o}=\displaystyle\max_{i<j}(r_j+jm_{i,j})$ and $c_\mathbf{u}=r_1$ as in Lemma~\ref{lemmaFairyChessCor}.

Then there exists an undirected circulant graph of order $\displaystyle w\prod_{i=1}^k r_i$, 

degree at most $\displaystyle 2\left(\sum_{i=1}^k r_i+c_o+c_u\right)$ and diameter $k$.
\end{theorem}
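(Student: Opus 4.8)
The plan is to exhibit the graph as the Cayley graph $\Cay(G,S)$ of $G=\bbZ_w\times\bbZ_{r_1}\times\cdots\times\bbZ_{r_k}$ for a suitable symmetric connection set $S$. Since $w$ and the $r_i$ are pairwise coprime, the Chinese Remainder Theorem gives $G\cong\bbZ_{w\prod_i r_i}$, so $\Cay(G,S)$ is automatically a circulant graph, and its order is $w\prod_{i=1}^k r_i$ as required. What remains is to choose $S$ so that $S=-S$, that $|S|\le 2\bigl(\sum_i r_i+c_{\mathbf{o}}+c_{\mathbf{u}}\bigr)$, and that every element of $G$ is a sum of at most $k$ elements of $S$.

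View $\mathbf{o}=(1,\dots,1)$, $\mathbf{u}=(1,2,\dots,k)$ and the $\mathbf{e}_i$ from Lemma~\ref{lemmaFairyChessCor} as elements of $\bbT=\bbZ_{r_1}\times\cdots\times\bbZ_{r_k}\le G$, and fix a bijection $\mathbf{a}\mapsto b_{\mathbf{a}}$ from the $(k+2)$-element set $A=\{\mathbf{o},\mathbf{u},\mathbf{e}_1,\dots,\mathbf{e}_k\}$ to $B$. I would then take
\[
S\;=\;\bigl\{\,\pm(b_{\mathbf{a}},\,h\+\mathbf{a})\;:\;\mathbf{a}\in A,\ 0\le h<c_{\mathbf{a}}\,\bigr\}\ \subseteq\ G ,
\]
which is symmetric by construction. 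For $\mathbf{a}=\mathbf{e}_i$ we have $c_{\mathbf{e}_i}=r_i=|\langle\mathbf{e}_i\rangle|$, so the $\mathbf{e}_i$-part of $S$ contributes at most $2r_i$ elements; the $\mathbf{o}$- and $\mathbf{u}$-parts contribute at most $2c_{\mathbf{o}}$ and $2c_{\mathbf{u}}$ elements respectively. Summing gives $|S|\le 2\bigl(\sum_i r_i+c_{\mathbf{o}}+c_{\mathbf{u}}\bigr)$, the asserted degree bound.

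For the diameter bound it suffices, by vertex-transitivity, to write an arbitrary $g=(z,\mathbf{x})\in G$ as a sum of at most $k$ elements of $S$. First use the hypothesis on $B$ to express $z=\sum_{j=1}^k\sigma_j\+b_{\mathbf{a}_j}$, where $b_{\mathbf{a}_1},\dots,b_{\mathbf{a}_k}$ are the distinct elements of $B$ underlying the $k$ chosen pairwise non-inverse members of $B\cup-B$, and $\sigma_j\in\{+1,-1\}$. This data picks out a $k$-element subset $S'=\{\mathbf{a}_1,\dots,\mathbf{a}_k\}$ of $A$ together with a prescribed sign $\sigma_j$ on each element. Now apply Lemma~\ref{lemmaFairyChessCor} to the subset $S'$ — in a form allowing $\mathbf{o}$ and $\mathbf{u}$ to occur with their prescribed signs — to obtain nonnegative integers $h_j<c_{\mathbf{a}_j}$ with $\mathbf{x}=\sum_{j=1}^k\sigma_j h_j\+\mathbf{a}_j$. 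Then
\[
g\;=\;\Bigl(\textstyle\sum_j\sigma_j b_{\mathbf{a}_j},\ \sum_j\sigma_j h_j\+\mathbf{a}_j\Bigr)\;=\;\sum_{j=1}^k\sigma_j\+(b_{\mathbf{a}_j},\,h_j\+\mathbf{a}_j)
\]
is a sum of $k$ elements of $S$, so the diameter is at most $k$ (and equals $k$ once the parameters are large enough, since then only $o(w\prod_i r_i)$ vertices lie within distance $k-1$ of the origin).

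The main obstacle is the sign-strengthened form of Lemma~\ref{lemmaFairyChessCor} invoked above: the single $k$-subset $S'$ produced by the $\bbZ_w$-decomposition must represent an \emph{arbitrary} element of $\bbT$, and must do so using the signs forced on its elements. The ``arbitrary $k$-subset'' part is precisely the content of Lemma~\ref{lemmaFairyChessCor}, and is the reason this loosely coupled product succeeds at all; the extra work is checking that sign changes cause no harm. For an $\mathbf{e}_i$ this is immediate, since $\pm\mathbf{e}_i$ both generate $\bbZ_{r_i}$ with $c_{\mathbf{e}_i}=r_i$. For $\mathbf{o}$ and $\mathbf{u}$ I would revisit the case analysis in the proof of Lemma~\ref{lemmaFairyChessCor}: when at most one of $\mathbf{o},\mathbf{u}$ lies in $S'$, a negated copy is absorbed exactly as a positive one is (replace the chosen multiple $h$ by $(-h)\bmod r_i$, which stays in range because $c_{\mathbf{o}}>r_i$ and $c_{\mathbf{u}}=r_1\ge r_i$), so the only genuinely new case is $\mathbf{o},\mathbf{u}\in S'$ with opposite signs. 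Multiplying the corresponding equation in $\bbZ_{r_i}\times\bbZ_{r_j}$ through by a unit, this reduces to showing that, along the list $(0,0),(1,1),(2,2),\dots$, the gaps between the points $\{-\ell(u,v):0\le\ell<t\}$ are not too large — the mirror image of the estimate proved in Lemma~\ref{lemmaFairyChess}, with the same bounds on both multipliers. Carrying out this verification, together with the routine Chinese-Remainder bookkeeping in the remaining cases, is where I expect the real content (and any hidden subtlety) to lie, so I would write it out in full.
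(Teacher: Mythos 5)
Your proposal is correct and follows essentially the same route as the paper: the same cyclic group $\bbZ_w\times\bbZ_{r_1}\times\cdots\times\bbZ_{r_k}$, the same symmetric connection set built from $\mathbf{o}$, $\mathbf{u}$ and the $\mathbf{e}_i$ paired with $\pm b$'s, and the same reduction of the diameter bound to Lemma~\ref{lemmaFairyChessCor} applied to the $k$-subset selected by the $\bbZ_w$-decomposition. The sign issue you flag is genuine (the paper's proof passes over it silently), and your sketched resolution does close it: for mixed signs on $\mathbf{o}$ and $\mathbf{u}$ the relevant set $\{-\ell(u,v):0\le\ell<t\}$ is the negation of the set handled in Lemma~\ref{lemmaFairyChess}, so it has the same multiset of gaps along the $(1,1)$-cycle and the same bound $h<s+mv$ suffices, while the remaining sign patterns reduce to the stated lemmas by negating the whole equation or adjusting the free $\mathbf{e}_i$ and single $\mathbf{o}$/$\mathbf{u}$ coefficients modulo $r_i$ as you describe.
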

\begin{proof}
Let $\bbT=\bbZ_{r_1}\times\bbZ_{r_2}\times\ldots\times\bbZ_{r_k}\times\bbZ_w$. Then $\bbT$ is a cyclic group since all its factors have coprime orders.

Let $X$ be the generating set consisting of the following elements.
\begin{bullets}
  \item $(x,0,0,\ldots,0,\pm b_1)$, $x\in\bbZ_{r_1}$
  \item $(0,x,0,\ldots,0,\pm b_2)$, $x\in\bbZ_{r_2}$
	\\[2pt] $\vdots$
  \item $(0,0,\ldots,0,x,\pm b_k)$, $x\in\bbZ_{r_k}$
  \item $\pm(x,x,\ldots,x,x,b_{k+1})$, $0\leq x<c_\mathbf{o}$
  \item $\pm(x,2x,\ldots,(k-1)x,kx,b_{k+2})$, $0\leq x<c_\mathbf{u}$
\end{bullets}
Then by construction and by Lemma~\ref{lemmaFairyChessCor}, every element of $\bbT$ is the sum of at most $k$ elements of $X$.
Since $\displaystyle |\bbT|=w\prod_{i=1}^k r_i$ and $\displaystyle |X|=2\left(\sum_{i=1}^k r_i+c_o+c_u\right)$, the result follows.
\end{proof}
For small diameters this technique results in the following asymptotic bounds.
\begin{theorem}\label{thmUndirSmall}
For diameters $k=3,4,5$, we have the following lower bounds:
\begin{enumerate}[(a)]
	\item $L_C^+(3)\geq\frac{57}{1000}$ and $L_C^-(3)\geq\frac{7}{125}$, so $R_C^+(3)>1.15455$ and $R_C^-(3)>1.14775$.
	\item $L_C^+(4)\geq L_C^-(4)\geq\frac{25}{3456}$, so $R_C^+(4)\geq R_C^-(4)>1.16654$.
	\item $L_C^+(5)\geq L_C^-(5)\geq\frac{109}{134456}$, so $R_C^+(5)\geq R_C^-(5)>1.20431$.
\end{enumerate}
\end{theorem}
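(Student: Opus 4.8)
The plan is to derive all three bounds from Theorem~\ref{thmUndir}: for each $k\in\{3,4,5\}$ I need a modulus $w$, a $(k+2)$-element set $B$ with the sumset-covering property stated there, and an infinite family of $k$-tuples $(r_1,\dots,r_k)$ satisfying that theorem's arithmetic hypotheses; the bound then falls out of the asymptotics of the order and the degree.

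For the family of $r_i$ I would not let them vary independently but couple them rigidly. Writing $R=\operatorname{rad}(w)$ and $L=\operatorname{lcm}(1,\dots,k-1)$, put $r_i=N+RL(k-i)$ for a single parameter $N$. Then $r_i-r_j=RL(j-i)$ is automatically divisible by $j-i$, so each $m_{i,j}=RL$ is a fixed positive integer and the inequalities $r_j\ge m_{i,j}(i-1)j$ hold once $N$ is large; moreover $r_i\equiv N\pmod p$ for every prime $p\mid RL$, and this covers every prime dividing $w$, every prime $\le k-1$, and every prime dividing any $i\le k$, so all the coprimality demands — pairwise coprimality of the $r_i$, $\gcd(r_i,w)=1$ and $\gcd(r_i,i)=1$ — collapse to the single condition $\gcd\!\big(N,\operatorname{lcm}(w,k!)\big)=1$. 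By the Chinese Remainder Theorem this holds for $N$ in a nonempty union of residue classes, hence with bounded gaps. With this family $\sum_i r_i=kN+O(1)$, $c_\mathbf{u}=r_1=N+O(1)$, and $c_\mathbf{o}=\max_{i<j}(r_j+j\,RL)=N+RLk=N+O(1)$, so Theorem~\ref{thmUndir} yields a circulant graph of diameter $k$, order $w\prod_i r_i=wN^k+O(N^{k-1})$, and degree at most $2(k+2)N+O(1)$. Letting $N\to\infty$ through the admissible residues gives $L^+_C(k)\ge w/(2(k+2))^k$.

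It remains to make $w$ (with an accompanying $B$) as large as possible. A finite search provides valid sets $B$ for $w=57$ when $k=3$, for $w=150$ when $k=4$, and for $w=436$ when $k=5$; substituting into $w/(2(k+2))^k$ gives $\tfrac{57}{1000}$, $\tfrac{150}{12^4}=\tfrac{25}{3456}$ and $\tfrac{436}{14^5}=\tfrac{109}{134456}$ — exactly the stated $L^+$ values — and $R^+_C(k)=k\,(L^+_C(k))^{1/k}$ gives the $R^+$ bounds. (A short side calculation, optimising the limiting ratios $r_i/N$ subject to $r_j\ge m_{i,j}(i-1)j$, shows that all $r_i$ asymptotically equal is optimal for this construction, so these are the sharpest bounds it can give.) For the $L^-$ statements I want every large degree, not merely a subsequence: given $d$, take the largest admissible $N$ with $2(k+2)N+O(1)\le d$, which by bounded gaps is $N=\tfrac{d}{2(k+2)}-O(1)$, and adjoin extra generators in inverse pairs — which cannot increase the diameter — until the degree is exactly $d$, adjoining also the unique involution of $\bbT$ to correct the parity when $w$ is even. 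For $k=4,5$ the optimal moduli $150$ and $436$ are already even, so every degree is attained and $L^-_C(k)=L^+_C(k)$. For $k=3$ the optimal $w=57$ is odd, and since for $k\ge3$ the hypotheses force all $r_i$ to be odd, $\bbT$ then has odd order and no involution, so odd degrees are unreachable; using instead the largest \emph{even} admissible modulus $w=56$ gives the slightly weaker all-degrees bound $L^-_C(3)\ge\tfrac{56}{1000}=\tfrac{7}{125}$.

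The asymptotic bookkeeping and the padding argument are routine. I expect the real obstacle to be the extremal combinatorial searches that actually produce the sets $B$ for $w=57,150,436$ (and $56$): these are genuine computations with no closed form, and getting $w$ as large as it needs to be is the crux of whether the stated constants are reached. The only other delicate point is checking that the rigid shape $r_i=N+RL(k-i)$ is simultaneously compatible with all the divisibility and coprimality conditions — which is exactly why $\operatorname{rad}(w)$ and $\operatorname{lcm}(1,\dots,k-1)$ are built into the common offset.
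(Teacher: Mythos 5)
Your proposal is correct and follows essentially the same route as the paper: both go through Theorem~\ref{thmUndir} with the same moduli ($w=57$, with $w=56$ as the even fallback for odd degrees at $k=3$, and $w=150$, $w=436$ for $k=4,5$), obtain the leading constant $w/(2(k+2))^k$ from an infinite family of admissible $k$-tuples all asymptotic to a common parameter, and then pad with extra generator pairs (plus the involution when the order is even) to cover every sufficiently large degree. The only differences are cosmetic: you take the rigid family $r_i=N+\operatorname{rad}(w)\operatorname{lcm}(1,\dots,k-1)(k-i)$, which is a clean, uniform way to satisfy the divisibility and coprimality conditions, whereas the paper uses hand-picked offsets $\Delta$ together with congruence conditions on $q$; both work and give the same asymptotics. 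The one ingredient you leave as an unverified assertion is the existence of the covering sets $B$ for $w=57,56,150,436$ --- this is exactly the finite computational crux you identify, and the paper discharges it by exhibiting the sets explicitly ($\{1,2,7,8,27\}$, $\{1,2,7,14,15\}$, $\{1,7,16,26,41,61\}$, $\{1,15,43,48,77,109,152\}$), so your argument is complete once those are written down and checked.
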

\begin{proof}
Given a diameter $k$, the strategy is to find an optimal value of $w$ which admits a set $B$ satisfying the conditions of Theorem~\ref{thmUndir}.
We then seek an infinite family of positive integers $q$ and a set $\Delta=\{\delta_1,\delta_2,\ldots,\delta_{k-1}\}$ such that for each of our
values of $q$, the set $T=\{q,q-\delta_1,\ldots,q-\delta_{k-1}\}$ satisfies the conditions of the theorem. We illustrate for $k=3$.

To prove \emph{(a)} we take $w=57$ and $B=\{1,2,7,8,27\}$. It is easily checked that every element of $\bbZ_{57}$ is the sum of three distinct
elements of $B\cup-B$, no two of which are inverses. Now we let $\Delta=\{4,6\}$. For any $q\geq 17,q\equiv 5\pmod{6},q\not\equiv 0,4,6\pmod{19}$
it is straightforward to verify that the set $T=\{q,q-4,q-6\}$ satisfies the conditions of Theorem~\ref{thmUndir}.
In the notation of Lemma~\ref{lemmaFairyChessCor}, we have $c_\mathbf{o}=q+4$.

Taking a generating set $X$ as defined in Theorem~\ref{thmUndir} we may construct a circulant graph of diameter 3, degree $d=|X|=10q-12$ and order
$57q(q-4)(q-6)=\frac{57}{1000}(d+12)(d-28)(d-48)$.

We can do this for an infinite number of values of $q$, and hence for an infinite number of values of $d=10q-12$ we have
\[CC(d,3)\geq\frac{57}{1000}(d+12)(d-28)(d-48)\]
This yields $L_C^+(3)\geq\frac{57}{1000}$. Now we need to consider $L_C^-(3)$.
The strategy will be to try to add ``few'' edges to our graphs to cover all possible degrees.
Observe that we can use this construction for any $q\equiv 17\pmod{114}$ and hence for any $d\equiv 158\pmod{1140}$.
Given any arbitrary \emph{even} degree $d$, we can therefore find some $d'$ no smaller than $d-1140$ for which the construction works.
We can then add $d-d'$ generators to our graph to obtain a graph of the same order, degree $d$ and diameter 3.

However our graphs always have odd order, and so we are unable to obtain an odd degree graph by this method.
To get round this problem we may use $w=56$, $B=\{1,2,7,14,15\}$, $\Delta=\{2,4\}$ and $c_\mathbf{o}=q+2$. 
Again it is easy to check that the relevant conditions are satisfied for any $q\geq 15$ such that $q\equiv 3,5\pmod{6}$ and $q\equiv 1,3,5,6\pmod{7}$.
Then for $d=10q-8$ we can construct a graph of order $\frac{7}{25}(d+8)(d-12)(d-32)$, degree $d$ and diameter 3.
We can do this for any $q\equiv 15\pmod{42}$ and hence for any $d\equiv 142\pmod{420}$.
So given any arbitrary degree $d$, we can therefore find some $d'$ no smaller than $d-420$ for which the construction works,
and then add $d-d'$ generators to our graph to obtain a graph of the same order and diameter 3.
(Since our graphs now have even order it is possible to add an odd number of generators.)
Since the number of added generators is bounded above (by 419), the order of the graph is $\frac{7}{125}d^3+O(d^2)$.
Result \emph{(a)} for $L_C^-(3)$ follows.

For \emph{(b)} and \emph{(c)} we adopt a similar method, except that in both cases the graphs used have even order and so our bounds on $L^+$ and $L^-$ are equal.
For brevity we show only the relevant sets in the construction, summarised as follows.

\emph{(b)} ($k=4$) -- Take $w=150$, $B=\{1,7,16,26,41,61\}$ and $\Delta=\{6,8,12\}$ so $c_\mathbf{o}=q+6$.
Then for $q\geq 49$, $q\equiv 19\pmod {30}$ and $d=12q-40$, we have
\[CC(d,4)\geq\frac{25}{3456}(d+40)(d-32)(d-56)(d-104).\]

\emph{(c)} ($k=5$) -- Take $w=436$, $B=\{1,15,43,48,77,109,152\}$ and $\Delta=\{0,4,10,12,$ $16\}$ so $c_\mathbf{o}=q+8$.
Then for $q\geq 77$, $q\equiv 5\pmod{6}$, $q\nequiv 0,1\pmod{5}$, $q\nequiv 0,4,10,12,16\pmod{109}$ and $d=14q-68$, we have
\[CC(d,5)\geq\frac{109}{134456}(d+68)(d+12)(d-72)(d-100)(d-156).\qedhere\]
\end{proof}

\subsection{Directed constructions}
An analogous method yields directed circulant graphs via the following theorem.
\begin{theorem}\label{thmDir}
Let $w$ and $k$ be positive integers and suppose that there exist sets $B$ and $T$ of non-negative integers with the following properties:
\begin{bullets}
	\item $B=\{0,b_2,\ldots,b_{k+2}\}$ has cardinality $k+2$ and the property that every element of $\bbZ_w$ can be expressed as the
sum of exactly $k$ distinct elements of $B$.
\item $T=\{r_1,r_2,\ldots,r_k\}$ has cardinality $k$ and the properties that all its elements are coprime to $w$, 
and it satisfies the requirements of Lemma~\ref{lemmaFairyChessCor}, i.e. for each $i<j$:
	\begin{enumerate}[(a)]
    \item $r_i>r_j$
		\item $\gcd(r_i,r_j)=1$
		\item $\gcd(r_i,i)=1$
		\item There is a positive integer $m_{i,j}$ such that $r_i-r_j=m_{i,j}(j-i)$ and $r_j\geq m_{i,j}(i-1)j$.
	\end{enumerate}
\end{bullets}

\vspace{1.5ex}
Let $c_\mathbf{o}=\displaystyle\max_{i<j}(r_j+jm_{i,j})$ and $c_\mathbf{u}=r_1$ as in Lemma~\ref{lemmaFairyChessCor}.

Then we may construct a directed circulant graph of order $\displaystyle w\prod_{i=1}^k r_i$, degree at most $\displaystyle \sum_{i=1}^k r_i+c_o+c_u-1$
and diameter $k$.
\end{theorem}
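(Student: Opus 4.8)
The plan is to follow the proof of Theorem~\ref{thmUndir} essentially verbatim, the only change being that the generating set is now one-sided. First I would set $\bbT=\bbZ_{r_1}\times\bbZ_{r_2}\times\ldots\times\bbZ_{r_k}\times\bbZ_w$; since the $r_i$ are pairwise coprime and each is coprime to $w$, this is a cyclic group of order $w\prod_{i=1}^k r_i$. I would identify the $k+2$ elements of $B$ with the $k+2$ elements $\mathbf{e}_1,\ldots,\mathbf{e}_k,\mathbf{o},\mathbf{u}$ of the set $A$ of Lemma~\ref{lemmaFairyChessCor}, matching $0\in B$ with $\mathbf{e}_1$, matching $b_i$ with $\mathbf{e}_i$ for $2\le i\le k$, and matching $b_{k+1},b_{k+2}$ with $\mathbf{o},\mathbf{u}$; the conditions imposed on $T$ in the statement are exactly those required by that lemma. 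Then I would take $X$ to be the analogue of the generating set of Theorem~\ref{thmUndir} but \emph{without} the $\pm$: for each $i$ the $r_i$ elements carrying some $x\in\bbZ_{r_i}$ in the $i$th torus coordinate, $0$ in the other torus coordinates, and $b_i$ (or $0$, when $i=1$) in the $\bbZ_w$ coordinate; the $c_\mathbf{o}$ elements $(x,x,\ldots,x,b_{k+1})$ with $0\le x<c_\mathbf{o}$; and the $c_\mathbf{u}$ elements $(x,2x,\ldots,kx,b_{k+2})$ with $0\le x<c_\mathbf{u}$. Finally I would discard from $X$ the single element $(0,\ldots,0,0)=\mathbf{0}$, which a Cayley graph may not use as a generator; this leaves $|X|\le\sum_{i=1}^k r_i+c_\mathbf{o}+c_\mathbf{u}-1$, so $\Cay(\bbT,X)$ is a directed circulant graph of order $w\prod_{i=1}^k r_i$ and degree at most $\sum_{i=1}^k r_i+c_\mathbf{o}+c_\mathbf{u}-1$.

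It then remains to check that every $(\mathbf{x},z)\in\bbT$, with $\mathbf{x}$ in the torus part and $z\in\bbZ_w$, is a sum of at most $k$ elements of $X$. Using the hypothesis on $B$, I would write $z$ as a sum of $k$ distinct elements of $B$, and let $S\subseteq A$ be the corresponding $k$-element subset under the identification above. Lemma~\ref{lemmaFairyChessCor} then supplies nonnegative integers $h_\mathbf{s}<c_\mathbf{s}$, one for each $\mathbf{s}\in S$, with $\mathbf{x}=\sum_{\mathbf{s}\in S}h_\mathbf{s}\,\mathbf{s}$. For each $\mathbf{s}\in S$ I would then select the generator of $X$ whose torus part is $h_\mathbf{s}\,\mathbf{s}$ --- this is a legitimate member of $X$ precisely because the block of generators associated with $\mathbf{s}$ ranges exactly over the values bounded by the same $c_\mathbf{s}$ that appears in the lemma --- and whose $\bbZ_w$ coordinate is the element of $B$ matched to $\mathbf{s}$. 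Summing these generators yields $\mathbf{x}$ in the torus part and $z$ in the $\bbZ_w$ part, using $k$ generators in all, so the diameter is at most $k$ and the theorem follows as in Theorem~\ref{thmUndir}.

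The one place where the directed argument genuinely differs from the undirected one --- and the step I would take the most care over --- is the distinguished element $0\in B$. The generator it would contribute, namely $(h_{\mathbf{e}_1},0,\ldots,0,0)$, coincides with the forbidden element $\mathbf{0}$ exactly when $h_{\mathbf{e}_1}=0$; in that case one simply omits it from the sum, leaving $k-1$ genuine generators, which is still within the diameter budget and changes nothing, since $h_{\mathbf{e}_1}\mathbf{e}_1=\mathbf{0}$ and the matched $\bbZ_w$ coordinate is $0$. In every other case the relevant $\bbZ_w$ coordinate is one of the nonzero $b_i$, so the generator is automatically distinct from $\mathbf{0}$ and lies in $X$. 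This same deletion of the zero element is exactly what produces the ``$-1$'' in the degree bound, in contrast to the factor $2$ and the absence of a ``$-1$'' in Theorem~\ref{thmUndir}; the remainder is a routine transcription of the undirected proof.
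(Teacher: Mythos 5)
Your proposal is correct and follows essentially the same construction as the paper: the same cyclic group $\bbZ_{r_1}\times\cdots\times\bbZ_{r_k}\times\bbZ_w$, the same one-sided generating set (the paper simply writes the first block as $x\in\bbZ_{r_1}\setminus\{0\}$ rather than deleting $\mathbf{0}$ afterwards, which is where the ``$-1$'' comes from), and the same appeal to Lemma~\ref{lemmaFairyChessCor} combined with the sumset property of $B$. Your explicit treatment of the case $h_{\mathbf{e}_1}=0$ is a detail the paper leaves implicit in its ``at most $k$'' phrasing, and it is handled correctly.
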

\begin{proof}
Let $\bbT=\bbZ_{r_1}\times\bbZ_{r_2}\times\ldots\times\bbZ_{r_k}\times\bbZ_w$. Then $\bbT$ is a cyclic group since all its factors have coprime orders.

Let $X$ be the generating set consisting of the following elements.
\begin{bullets}
  \item $(x,0,0,\ldots,0,0)$, $x\in\bbZ_{r_1}\setminus\{0\}$
  \item $(0,x,0,\ldots,0,b_2)$, $x\in\bbZ_{r_2}$
	\\[2pt] $\vdots$
  \item $(0,0,\ldots,0,x,b_k)$, $x\in\bbZ_{r_k}$
  \item $(x,x,\ldots,x,x,b_{k+1})$, $0\leq x<c_\mathbf{o}$
  \item $(x,2x,\ldots,(k-1)x,kx,b_{k+2})$, $0\leq x<c_\mathbf{u}$
\end{bullets}
Then by construction and by Lemma~\ref{lemmaFairyChessCor}, every element of $\bbT$ is the sum of at most $k$ elements of $X$.
Since $\displaystyle |\bbT|=w\prod_{i=1}^k r_i$ and $\displaystyle |X|=\sum_{i=1}^k r_i+c_o+c_u-1$, the result follows.
\end{proof}

For small diameters this technique results in the following asymptotic bounds.
\begin{theorem}\label{thmDirSmall}
For diameters $k=2,\ldots,9$, we have the following lower bounds on $L_D^-(k)$ and $R_D^-(k)$
\begin{enumerate}[(a)]
	\item $L_D^-(2)\geq\frac{3}{8}$, so $R_D^-(2)>1.22474$.
	\item $L_D^-(3)\geq\frac{9}{125}$, so $R_D^-(3)>1.24805$.
	\item $L_D^-(4)\geq\frac{13}{1296}$, so $R_D^-(4)>1.26588$.
	\item $L_D^-(5)\geq\frac{17}{16807}$, so $R_D^-(5)>1.25881$.
	\item $L_D^-(6)\geq\frac{3}{32768}$, so $R_D^-(6)>1.27378$.
	\item $L_D^-(7)\geq\frac{10}{1594323}$, so $R_D^-(7)>1.26436$.
	\item $L_D^-(8)\geq\frac{9}{25000000}$, so $R_D^-(8)>1.25206$.
	\item $L_D^-(9)\geq\frac{42}{2357947691}$, so $R_D^-(9)>1.23939$.
\end{enumerate}
\end{theorem}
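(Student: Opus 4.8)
The plan is to run the argument of Theorem~\ref{thmUndirSmall} essentially verbatim, but invoking Theorem~\ref{thmDir} in place of Theorem~\ref{thmUndir}. For each diameter $k\in\{2,\ldots,9\}$ I would exhibit three explicit ingredients: a modulus $w$, a $(k+2)$-element set $B=\{0,b_2,\ldots,b_{k+2}\}$ of non-negative integers, and a set of shifts $\Delta=\{\delta_1,\ldots,\delta_{k-1}\}$, together with an arithmetic progression of integers $q$; then $T=\{q,q-\delta_1,\ldots,q-\delta_{k-1}\}$, and one checks directly that $B$ and $T$ satisfy the hypotheses of Theorem~\ref{thmDir} for every $q$ in the progression. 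The required values turn out to be $w=6,9,13,17,24,30,36,42$ for $k=2,\ldots,9$ respectively (with, for instance, $w=6$, $B=\{0,1,2,4\}$, $\Delta=\{2\}$ and $q\equiv1\pmod 6$ in the case $k=2$).

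Two independent verifications are needed. The first is that $B$ has the property that every element of $\bbZ_w$ is a sum of exactly $k$ distinct elements of $B$; since selecting $k$ of the $k+2$ elements amounts to omitting two, this is the finite assertion that the sums obtained by omitting a pair cover $\bbZ_w$, and it is settled by direct enumeration. Here $w$ is chosen as small as this allows. The second is the set of conditions in Lemma~\ref{lemmaFairyChessCor}: writing $r_i=q-\delta_{i-1}$ with $\delta_0=0$, the quantity $m_{i,j}=(\delta_{j-1}-\delta_{i-1})/(j-i)$ must be a positive integer for all $i<j$, which is a condition on $\Delta$ alone; the inequalities $r_j\geq m_{i,j}(i-1)j$ hold once $q$ is large; and the coprimality requirements $\gcd(r_i,r_j)=\gcd(r_i,i)=\gcd(r_i,w)=1$ are secured by restricting $q$ to a suitable residue class modulo the product of $w$ with the small primes that occur. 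Note that the directed setting is in one respect cleaner than the undirected one: there is no parity obstruction forcing an order-$2$ element, so no auxiliary second construction is required to reach every degree.

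Granting these, Theorem~\ref{thmDir} produces, for each admissible $q$, a directed circulant graph of order $w\prod_{i=1}^k r_i$, which is a degree-$k$ polynomial in $q$ with leading coefficient $w$, and of degree $d=\sum_i r_i+c_{\mathbf o}+c_{\mathbf u}-1$. Since each $r_i=q-O(1)$, $c_{\mathbf u}=r_1=q-O(1)$ and $c_{\mathbf o}=\max_{i<j}(r_j+j\+m_{i,j})=q+O(1)$, we have $d=(k+2)q+O(1)$, so the order equals $\frac{w}{(k+2)^k}d^k+O(d^{k-1})$; with the stated values of $w$ this gives precisely the claimed leading coefficients $\frac38,\frac{9}{125},\ldots,\frac{42}{2357947691}$, hence the stated values of $R_D^-(k)$.

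To pass from this subsequence of degrees to all large degrees — i.e. to bound $L_D^-$ and not merely $L_D^+$ — I would use the usual padding step: as $q$ runs over a full arithmetic progression, $d=(k+2)q+O(1)$ runs over an arithmetic progression of bounded common difference $\Gamma$, so given any sufficiently large $d$ one takes the largest admissible $d'\leq d$ (so $d-d'<\Gamma$), builds the graph of degree $d'$, and adjoins $d-d'$ further group elements to the generating set, which leaves the order unchanged and cannot increase the diameter. I expect the main obstacle to be the search itself: minimising $w$ forces $B$ to be close to a perfect additive $2$-basis of $\bbZ_w$, yet the coprimality conditions restrict which $q$ (and implicitly which shift patterns $\Delta$) are usable, so the two requirements must be reconciled, and — unlike the undirected case — one must additionally keep $0\in B$ throughout. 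The underlying computations are routine but have to be carried out afresh for each of the eight diameters.
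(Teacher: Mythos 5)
Your proposal follows the paper's proof essentially verbatim: the paper also applies Theorem~\ref{thmDir} with exactly the moduli $w=6,9,13,17,24,30,36,42$ for $k=2,\ldots,9$ (your $k=2$ data $w=6$, $B=\{0,1,2,4\}$, $\Delta=\{2\}$, $q\equiv1\pmod 6$ matches its case (a) exactly), restricts $q$ to suitable residue classes for the coprimality conditions, obtains order $\frac{w}{(k+2)^k}d^k+O(d^{k-1})$, and pads with extra generators to cover all large degrees. The only difference is that the paper writes out the explicit sets $B$ and $\Delta$ for each $k\geq3$, which you correctly identify as a routine finite verification.
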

\begin{proof}
The method is exactly the same as the proof of Theorem~\ref{thmUndirSmall} and we summarise as follows.

\emph{(a)} ($k=2$) -- Take $w=6$, $B=\{0,1,2,4\}$ and $\Delta=\{2\}$ so $c_\mathbf{o}=q+2$.
Then for $q\geq 7$, $q\equiv 1\pmod {6}$ and $d=4q-1$, we have
\[DCC(d,2)\geq\frac{3}{8}(d+1)(d-7).\]

\emph{(b)} ($k=3$) -- Take $w=9$, $B=\{0,1,2,3,6\}$ and $\Delta=\{4,6\}$ so $c_\mathbf{o}=q+4$.
Then for $q\geq 17$, $q\equiv 5\pmod {6}$ and $d=5q-7$, we have
\[DCC(d,3)\geq\frac{9}{125}(d+7)(d-13)(d-23).\]

\emph{(c)} ($k=4$) -- Take $w=13$, $B=\{0,1,3,5,7,8\}$ and $\Delta=\{2,4,6\}$ so $c_\mathbf{o}=q+2$.
Then for $q\geq 23$, $q\equiv 5\pmod {6}$, $q\not\equiv 0,2,4,6\pmod{13}$ and $d=6q-11$, we have
\[DCC(d,4)\geq\frac{13}{1296}(d+11)(d-1)(d-13)(d-25).\]

\emph{(d)} ($k=5$) -- Take $w=17$, $B=\{0,1,2,3,4,8,13\}$ and $\Delta=\{4,10,12,16\}$ so $c_\mathbf{o}=q+8$.
Then for $q\geq 77$, $q\equiv 5\pmod {6}$, $q\not\equiv 0,1\pmod{5}$, $q\not\equiv 0,4,10,12,16\pmod{17}$ and $d=7q-35$, we have
\[DCC(d,5)\geq\frac{17}{16807}(d+35)(d+7)(d-35)(d-49)(d-77).\]

\emph{(e)} ($k=6$) -- Take $w=24$, $B=\{0,1,2,4,8,13,18,22\}$ and $\Delta=\{6,12,18,24,30\}$ so $c_\mathbf{o}=q+6$.
Then for $q\geq 181$, $q\equiv 1,5\pmod{6}$, $q\not\equiv 0,4\pmod {5}$ and $d=8q-85$, we have
\[DCC(d,6)\geq\frac{3}{32768}(d+85)(d+37)(d-11)(d-59)(d-107)(d-155).\]

\emph{(f)} ($k=7$) -- Take $w=30$, $B=\{0,1,2,6,9,12,16,17,18\}$ and $\Delta=\{0,2,6,18,20,$ $30,42\}$ so $c_\mathbf{o}=q+42$.
Then for $q\geq 529$, $q\equiv 1\pmod{6}$, $q\equiv 4\pmod{5}$, $q\nequiv 0,2,6\pmod{7}$, $q\nequiv 9\pmod {11}$ and $d=9q-77$, we have
\[DCC(d,7)\geq\frac{10}{1594323}(d+77)(d+59)(d+23)(d-85)(d-103)(d-193)(d-301).\]

\emph{(g)} ($k=8$) -- Take $w=36$, $B=\{0,1,2,3,6,12,19,20,27,33\}$ and $\Delta=\{0,6,12,$ $18,24,30,$ $36,42\}$ so $c_\mathbf{o}=q+6$.
Then for $q\geq 353$, $q\equiv 1,5\pmod{6}$, $q\equiv 3\pmod{5}$, $q\nequiv 0,1\pmod{7}$ and $d=10q-163$, we have
\begin{align*}
DCC(d,8)\geq\frac{9}{25000000}&(d+163)(d+103)(d+43)(d-17)\\
&(d-77)(d-137)(d-197)(d-257).
\end{align*}

\emph{(h)} ($k=9$) -- Take $w=42$, $B=\{0,1,2,3,4,9,16,20,26,30,37\}$ and $\Delta=\{0,2,6,$ $12,20,30,$ $42,56,72\}$ so $c_\mathbf{o}=q+72$.
Then for $q\geq 1093$, $q\equiv 1\pmod {6}$, $q\equiv 3,4\pmod {5}$, $q\equiv 1,3,4\pmod {7}$, $q\nequiv 1,6,9\pmod {11}$, 
$q\nequiv 4,7\pmod {13}$ and $d=11q-169$, we have
\begin{align*}
DCC(d,9)\geq\frac{42}{2357947691}&(d+169)(d+147)(d+103)(d+37)(d-51)\\
&(d-161)(d-293)(d-447)(d-623).\qedhere
\end{align*}

\end{proof}

\subsection{Limitations}
In~\cite{Lewis2015}, Lewis showed that an analogous class of constructions using finite fields to create graphs of diameter 2 is limited by the bound
$L^-_C(2)\leq \frac{3}{8}$. The constructions in this section have a similar limitation:

\begin{observation}\label{thmOverallLowerBound}
  Let $k$ be a positive integer.
  The direct product constructions of Theorems~\ref{thmUndir} and~\ref{thmDir} 
	can never yield a lower bound on $L^-_C(k)$ or $L^-_D(k)$ that exceeds $\frac{k+1}{2(k+2)^{k-1}}$.
\end{observation}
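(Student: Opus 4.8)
The plan is to bound, for \emph{any} graph that the construction of Theorem~\ref{thmUndir} or Theorem~\ref{thmDir} can produce, the ratio of its order to the $k$-th power of its degree, and to show this ratio never exceeds $\tfrac{k+1}{2(k+2)^{k-1}}$; since any lower bound on $L^-_C(k)$ or $L^-_D(k)$ obtainable from such a construction is at most the limit inferior of these ratios along an infinite family, the observation follows. We may assume $k\geq2$, as for $k=1$ the stated quantity equals $1=\tfrac1{k!}$, the trivial upper bound of Observation~\ref{obsTrivialUB}. Recall that both constructions yield a graph of order $n=w\prod_{i=1}^k r_i$ with $c_\mathbf{u}=r_1$.

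First I would bound $w$ by counting representations. In the undirected case every element of $\bbZ_w$ is a sum of $k$ \emph{distinct} members of $B\cup-B$, no two of them inverse, and each such sum is specified by a choice of $k$ of the $k+2$ indices together with a sign for each chosen index, so there are at most $\binom{k+2}{k}2^k=(k+1)(k+2)2^{k-1}$ of them; hence $w\leq(k+1)(k+2)2^{k-1}$. In the directed case there is no choice of sign, so the same count gives $w\leq\binom{k+2}{k}=\tfrac12(k+1)(k+2)$.

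Next I would bound the degree below in terms of $\rho:=\bigl(\prod_{i=1}^k r_i\bigr)^{1/k}$. As noted in the proof of Lemma~\ref{lemmaFairyChessCor}, $c_\mathbf{o}\geq r_2+2m_{1,2}=2r_1-r_2>r_1$, so $c_\mathbf{o}+c_\mathbf{u}>2r_1$; together with the arithmetic--geometric mean inequality $\sum_{i=1}^k r_i\geq k\rho$ and $r_1\geq\rho$ this gives $\sum_{i=1}^k r_i+c_\mathbf{o}+c_\mathbf{u}>(k+2)\rho$. Taking the degree equal to the formula of the relevant theorem (at least generically; see below), we get $d>2(k+2)\rho$ in the undirected case and $d>(k+2)\rho-1$ in the directed case. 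Since $n=w\rho^k$, letting $\rho\to\infty$ along the family (which absorbs the additive $-1$ in the directed case) gives $n/d^k\leq w/\bigl(2(k+2)\bigr)^k$ and $n/d^k\leq w/(k+2)^k$ respectively; substituting the bounds on $w$ from the first step, in both cases the right-hand side equals $\tfrac{(k+1)(k+2)}{2(k+2)^k}=\tfrac{k+1}{2(k+2)^{k-1}}$. The two computations agree precisely because the factor $2$ that the undirected degree loses to symmetrisation is exactly the factor $2$ gained in the count of admissible sums from the choice of signs.

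The point requiring care — and the main obstacle — is that Theorems~\ref{thmUndir} and~\ref{thmDir} only bound the degree from \emph{above}: if distinct listed generators coincide, for instance when some $b_i$ is its own negative in $\bbZ_w$, the true degree is smaller than the stated formula and $n/d^k$ correspondingly larger. I would dispose of this by observing that such a collision always stems from a coincidence within $B$ that simultaneously deletes admissible sums from the count in the first step, so it forces a reduction in $w$ that is more than enough to preserve the final bound; making this precise amounts to a short case check over which of the $k+2$ summands is self-inverse. Equivalently, since only an infinite family with $n\to\infty$ can bound a limit inferior, one may pad the generating set up to degree exactly the formula by adjoining arbitrary further generators — this cannot push the diameter below $k$, as $n=\Theta(d^k)$ vastly exceeds the Moore bound for diameter $k-1$ — and then the generic estimate above applies verbatim.
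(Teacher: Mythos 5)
Your core argument is correct and is in essence the paper's: the same counting bound on $w$ (at most $\binom{k+2}{k}2^k$ admissible signed sums in the undirected case, $\binom{k+2}{k}$ in the directed case) combined with a comparison of the order $w\prod r_i$ against the $k$-th power of the degree formula. Where you differ is in how the degree is related to the order: the paper restricts attention to families of the form $r_i=q-a_i$ with fixed offsets (as in Theorems~\ref{thmUndirSmall} and~\ref{thmDirSmall}), bounds $m_{i,j}\leq a_k-a_1$ and $c_\mathbf{o}\leq q+ka_k$, and works asymptotically in $q$; you instead lower-bound the degree formula by $(k+2)\rho$ (resp.\ $2(k+2)\rho$) for \emph{arbitrary} admissible $T$, using $c_\mathbf{o}\geq r_2+2m_{1,2}>r_1\geq\rho$, $c_\mathbf{u}=r_1\geq\rho$ and the AM--GM inequality. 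This buys a per-graph estimate valid for every choice of parameters, not just the $q-a_i$ families, which is a genuine (if modest) strengthening. On your ``point requiring care'': the paper sidesteps it entirely by computing with the degree formula stated in Theorems~\ref{thmUndir} and~\ref{thmDir}, i.e.\ it treats the bound yielded by those theorems as the one obtained from their stated degree, so no collision analysis appears there. Your first remedy (a coincidence among the $b_i$ modulo $w$ that halves a block also deletes a corresponding fraction of the admissible sums and hence forces $w$ down) is the right idea, though you assert rather than verify that the trade-off always favours the bound; note that for $k\geq2$ only the $\mathbf{e}_i$-blocks can lose more than $O(1)$ generators, which keeps the case check short. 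Your second remedy, however, does not do what you claim: padding the generating set up to the formula degree produces a different graph with a \emph{worse} ratio and says nothing about the bound one could extract from the unpadded graph, so ``equivalently'' is not justified; if you want to treat the collision issue at all, the first argument is the one to carry out.
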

\begin{proof}
First we consider the undirected case.
Suppose the requirements of Theorem~\ref{thmUndir} hold and
for each $i=1,\ldots,k$, we have $r_i=q-a_i$, where $a_1<a_2<\ldots <a_k$.
Let $\bbT=\bbZ_{q-a_1}\times\ldots\times\bbZ_{q-a_k}\times\bbZ_w$ and
$X$ be its generating set as in the proof of Theorem~\ref{thmUndir}.

Since every element of $\bbZ_w$ is a sum of $k$ distinct elements of $B$, no pair of which are inverses,
we must have $w\leq \binom{k+2}{k}2^k=(k+1)(k+2)2^{k-1}$.

By the requirements of Lemma~\ref{lemmaFairyChessCor}, for any $i<j$, we have $m_{i,j}\leq r_i-r_j$ and $c_\mathbf{o}=\max\limits_{i<j} (r_j+j\+m_{i,j})$.
Hence, since $r_i=q-a_i$, we have $m_{i,j}\leq a_k-a_1$, and so $c_\mathbf{o}\leq q+ka_k$.

Thus $X$ is the generating set for a Cayley graph on $\bbT$ with diameter $k$, degree $d$ no greater than $2(k+2)q-2\sum_{i=1}^{k}{a_i} +2ka_k-2a_1$,
and order $n=w(q-a_1)(q-a_2)\ldots(q-a_k)$.

Hence,
$
n = \frac{w}{(2(k+2))^k}d^k+O(d^{k-1}) \leq \frac{(k+1)(k+2)2^{k-1}}{((2(k+2))^k}d^k+O(d^{k-1}) = \frac{k+1}{2(k+2)^{k-1}}d^k+O(d^{k-1}),
$
as required.

The directed case is analogous. We follow Theorem~\ref{thmDir} and its proof.
In this case, every
element of $\bbZ_w$ is the sum of $k$ distinct elements of $B$, so $w\leq \binom{k+2}{k}=(k+1)(k+2)/2$, and $X$ is the generating set for a Cayley graph on $\bbT$ with diameter $k$, degree $d\leq(k+2)q-\sum_{i=1}^{k}{a_i} +ka_k-a_1-1$,
and order $n=w(q-a_1)(q-a_2)\ldots(q-a_k)$.

Hence,
$
n = \frac{w}{(k+2)^k}d^k+O(d^{k-1}) \leq \frac{(k+1)(k+2)}{2(k+2)^k}d^k+O(d^{k-1}) = \frac{k+1}{2(k+2)^{k-1}}d^k+O(d^{k-1}).
$
\end{proof}

Observe that, in the limit,
$$
\liminfty[k] k\+\left( \frac{k+1}{2(k+2)^{k-1}}\right)^{\!1/k} =\; 1.
$$

As a consequence, these direct product constructions themselves can never yield an improvement on the trivial lower bound for the limiting value of $R^-_C(k)$ or $R^-_D(k)$.
However, it is possible to combine graphs of small diameter to produce larger graphs in such a way that we can improve
on the trivial lower bound in the limit as the diameter increases. The next section introduces this idea.

\section{A general graph product construction}\label{sectStitching}
The following theorem gives a simple way to combine two cyclic Cayley graphs to obtain a third cyclic Cayley graph. It is valid in both the directed and undirected cases.
\begin{theorem}\label{thmGraphProduct}
 Let $G_1$ and $G_2$ be two cyclic Cayley graphs of diameters $k_1$ and $k_2$, orders $n_1$ and $n_2$, and degrees $d_1$ and $d_2$ respectively.
 In the case of undirected graphs where $d_1$ and $d_2$ are both odd let $\delta=1$, otherwise $\delta=0$. In the directed case let $\delta=0$ always.
 Then there exists a cyclic Cayley graph with diameter $k_1+k_2$, degree at most $d_1+d_2+\delta$, and order $n_1n_2$.
\end{theorem}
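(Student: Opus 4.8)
The plan is to realise the sought graph as a Cayley graph of $\bbZ_n$ with $n=n_1n_2$, using the subgroup chain $0\le n_1\bbZ_n\le\bbZ_n$ rather than a direct product, so that no coprimality hypothesis is needed. Write $S_1\subseteq\bbZ_{n_1}$ and $S_2\subseteq\bbZ_{n_2}$ for the connection sets of $G_1$ and $G_2$. The subgroup $H=n_1\bbZ_n$ is cyclic of order $n_2$ via the isomorphism $\phi\colon\bbZ_{n_2}\to H$, $\phi(s)=n_1s$, and the quotient $\bbZ_n/H$ is $\bbZ_{n_1}$ via the reduction map $\pi\colon\bbZ_n\to\bbZ_{n_1}$. (If $n_1=1$ or $n_2=1$ then $k_2=0$ or $k_1=0$ and one may simply take $G=G_1$ or $G=G_2$; so assume $n_1,n_2\ge2$.) I would choose a set of lifts $\widetilde S_1=\{\tilde s:s\in S_1\}$ with $\tilde s\equiv s\pmod{n_1}$ for each $s$, and set $S=\widetilde S_1\cup\phi(S_2)$; the new graph is $\Cay(\bbZ_n,S)$, which has order $n_1n_2$.

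The diameter bound is the clean core of the argument. Given $g\in\bbZ_n$, the criterion from the introduction and the hypothesis that $G_1$ has diameter $k_1$ give $\pi(g)=s_1+\dots+s_m$ for some $m\le k_1$ and $s_t\in S_1$. Put $g_1=\tilde s_1+\dots+\tilde s_m$; then $\pi(g_1)=\pi(g)$, so $g-g_1\in H$, say $g-g_1=\phi(h)$ with $h\in\bbZ_{n_2}$. Since $G_2$ has diameter $k_2$, $h=s_1'+\dots+s_p'$ with $p\le k_2$ and $s_t'\in S_2$, whence $g-g_1=\phi(s_1')+\dots+\phi(s_p')$ and $g=\tilde s_1+\dots+\tilde s_m+\phi(s_1')+\dots+\phi(s_p')$ is a sum of at most $k_1+k_2$ elements of $S$; so $\Cay(\bbZ_n,S)$ has diameter at most $k_1+k_2$. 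It is also immediate that $0\notin S$ (each lift is $\not\equiv0\pmod{n_1}$, and $\phi$ is injective with $0\notin S_2$), and that $\widetilde S_1\cap\phi(S_2)=\emptyset$ because $\phi(S_2)\subseteq H=\ker\pi$ while the lifts lie outside $\ker\pi$; hence $|S|=|S_1|+|S_2|=d_1+d_2$ in the directed case, as required (with $\delta=0$).

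I expect the only real obstacle to be the bookkeeping forced by the symmetry condition $S=-S$ in the undirected case. Since $\phi$ commutes with negation, $\phi(S_2)=-\phi(S_2)$ for free, and an inverse pair $\{s,-s\}\subseteq S_1$ with $s\ne-s$ can be lifted to $\{\tilde s,-\tilde s\}$; the sole difficulty is a generator of order $2$ in $S_1$, which must be $s=n_1/2$ (so $n_1$ is even), and a lift $\tilde s$ of it is self-inverse in $\bbZ_n$ precisely when $n_2$ is odd. To control this I would first note that $d_i$ even forces $n_i/2\notin S_i$ (the non-self-inverse generators come in pairs, so $n_i/2\in S_i$ would make $|S_i|$ odd); thus, after interchanging $G_1$ and $G_2$ if necessary --- which alters nothing in the symmetric conclusion --- we may assume that either $d_1$ is even, in which case $S_1$ has no order-$2$ generator and a symmetric lift of $S_1$ costs nothing, giving $|S|=d_1+d_2=d_1+d_2+\delta$ with $\delta=0$; or $d_1$ and $d_2$ are both odd ($\delta=1$), in which case I would lift $S_1\setminus\{n_1/2\}$ symmetrically and adjoin \emph{both} $n_1/2$ and $-n_1/2$ to $S$, giving $|S|=(d_1-1)+2+d_2=d_1+d_2+\delta$. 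In this last sub-case $\pi$ still maps the lifted set onto $S_1$ (as $\pi(n_1/2)=\pi(-n_1/2)=n_1/2$), so the diameter argument above applies verbatim, and $n_2\ge2$ guarantees $n_1/2\ne-n_1/2$ in $\bbZ_n$ so that no elements are double-counted. Verifying these parity statements and re-checking $S=-S$ and $0\notin S$ in the last sub-case is the routine part; all the conceptual content sits in the subgroup-chain construction and the two-step expression of an arbitrary element of $\bbZ_n$.
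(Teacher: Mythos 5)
Your construction is essentially the paper's own: the paper forms the connection set $n_2S_1\cup S_2$ in $\bbZ_{n_1n_2}$ (embedding one connection set in a subgroup and lifting the other via small representatives, then adding one extra element when $d_1,d_2$ are both odd to restore $S=-S$), which is exactly your subgroup-chain construction with the roles of $G_1$ and $G_2$ interchanged. Your proof is correct and in fact spells out the diameter bound and the odd/even bookkeeping more explicitly than the paper does.
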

\begin{proof}
  Let $S_1$ be the connection set of $G_1$ so that $|S_1|=d_1$ and similarly for $G_2$.
	For convenience we consider each $S_i$ to consist of elements within the interval $(-n_i/2,n_i/2]$.
	Let $G$ be the cyclic group $\bbZ_{n_1n_2}$ and consider the connection set $S'=n_2S_1\cup S_2$. Then $|S'|\leq n_1+n_2$.
	
	We now construct a connection set $S$ for the group $G$ such that the Cayley graph $\Cay(G,S)$ has diameter $k_1+k_2$.
	In the directed case we may simply take $S=S'$.
	In the undirected case we need to ensure that $S=-S$.
	If at least one of $d_1,d_2$ is even we may assume without loss of generality that $d_2$ is even and then we may again let $S=S'$ and $S=-S$ by construction.

	It remains to consider the undirected case when $d_1$ and $d_2$ are both odd (the case $\delta=1$).
	In that case we know $n_2/2\in S_2\subset S'$ and we let $S=S'\cup \{-n/2\}$ so that $S=-S$.
	
	It is then clear that the Cayley graph $\Cay(G,S)$ has degree at most $d_1+d_2+\delta$, diameter $k_1+k_2$ and order $n_1n_2$.
\end{proof}
We can use this ``stitching'' construction to obtain lower bounds on our $L$ and $R$ values for large diameters, given values for smaller diameters.
\begin{corollary}\label{corGraphProduct}
  If $L(k)$ is one of $L^-_C(k)$, $L^+_C(k)$, $L^-_D(k)$ or $L^+_D(k)$ and
  $R(k)$ is one of $R^-_C(k)$, $R^+_C(k)$, $R^-_D(k)$ or $R^+_D(k)$, then

  (a) $\displaystyle L(k_1+k_2) \;\geq\; \frac{L(k_1)\+L(k_2)\+k_1^{k_1}\+k_2^{k_2}}{(k_1+k_2)^{k_1+k_2}}$

  (b) $\displaystyle R(k_1+k_2) \;\geq\; \left( R(k_1)^{k_1} R(k_2)^{k_2} \right)^{\frac{1}{k_1+k_2}}$
\end{corollary}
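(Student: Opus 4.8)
The plan is to obtain part~(b) as a purely algebraic consequence of part~(a), and to prove part~(a) by feeding into Theorem~\ref{thmGraphProduct} two near-optimal circulant graphs whose degrees are split in the ratio $k_1:k_2$. That split is what produces the constant in~(a): if the available degree $d$ is divided as $\alpha d$ and $(1-\alpha)d$, stitching yields a graph of order roughly $L(k_1)L(k_2)\,\alpha^{k_1}(1-\alpha)^{k_2}\,d^{k_1+k_2}$, and $\alpha^{k_1}(1-\alpha)^{k_2}$ is maximised at $\alpha=\tfrac{k_1}{k_1+k_2}$, where it equals $k_1^{k_1}k_2^{k_2}/(k_1+k_2)^{k_1+k_2}$.

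For part~(a) I would first treat the \emph{liminf} cases, $L=L^-_C$ and $L=L^-_D$. Fix $\varepsilon>0$. Given a large target degree $d$, put $d_1=\floor{\tfrac{k_1}{k_1+k_2}\,d}$ and $d_2=d-d_1-1$, so $d_1,d_2\to\infty$ with $d_i/d\to\tfrac{k_i}{k_1+k_2}$ and $d_1+d_2+\delta\le d$ for the correction term $\delta\in\{0,1\}$ of Theorem~\ref{thmGraphProduct}. Once $d$, hence each $d_i$, exceeds the threshold making $CC(d_i,k_i)\ge(L(k_i)-\varepsilon)\,d_i^{\,k_i}$ (which exists by the liminf defining $L(k_i)$), there are circulant graphs $G_1,G_2$ of diameters $k_1,k_2$, degrees $d_1,d_2$, and orders $n_i\ge(L(k_i)-\varepsilon)\,d_i^{\,k_i}$. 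Theorem~\ref{thmGraphProduct} stitches them into a circulant graph of diameter $k_1+k_2$, degree at most $d_1+d_2+\delta\le d$, and order $n_1n_2\ge(L(k_1)-\varepsilon)(L(k_2)-\varepsilon)\,d_1^{\,k_1}d_2^{\,k_2}$ (padding the connection set with spare generators, in inverse-closed fashion in the undirected case, to reach a prescribed degree without changing the order or the diameter bound, exactly as in the degree adjustments of Section~\ref{sectDirectProd}). Dividing by $d^{k_1+k_2}$, letting $d\to\infty$ and then $\varepsilon\to0$ gives
\[
 L(k_1+k_2)\ \ge\ L(k_1)L(k_2)\Bigl(\tfrac{k_1}{k_1+k_2}\Bigr)^{\!k_1}\Bigl(\tfrac{k_2}{k_1+k_2}\Bigr)^{\!k_2}
 \ =\ \frac{L(k_1)\,L(k_2)\,k_1^{k_1}k_2^{k_2}}{(k_1+k_2)^{k_1+k_2}}.
\]

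For the \emph{limsup} cases, $L=L^+_C$ and $L=L^+_D$, the construction is identical except that $d_1$ is taken from a degree sequence nearly attaining $L^+(k_1)$ and $d_2$ from one nearly attaining $L^+(k_2)$, with their ratio arranged to approach $k_1:k_2$ (see the remark below). Part~(b) is then immediate: writing $L(k)=(R(k)/k)^k$ in the displayed inequality, the factors $k_i^{k_i}$ cancel to give $L(k_1+k_2)\ge R(k_1)^{k_1}R(k_2)^{k_2}/(k_1+k_2)^{k_1+k_2}$; taking the $(k_1+k_2)$-th root and multiplying through by $k_1+k_2$ yields $R(k_1+k_2)\ge\bigl(R(k_1)^{k_1}R(k_2)^{k_2}\bigr)^{1/(k_1+k_2)}$. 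The clean cancellation of the $k_i^{k_i}$ and $(k_1+k_2)^{k_1+k_2}$ factors is exactly what makes the $R$-values multiplicative under stitching.

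The only step needing real care is the limsup case. For $L^-$ every sufficiently large degree is usable, so the argument runs verbatim; but for $L^+$ we have near-optimal graphs only along some sequence of degrees for each of $k_1$ and $k_2$, and we must select from these sequences a pair $d_1,d_2$ (after padding up with spare generators, which costs only a factor $(1+o(1))$ in the ratio $n_1n_2/d^{k_1+k_2}$) with $d_1/d_2$ close to $k_1/k_2$. This is unproblematic for all the explicit families used in this paper, whose near-optimal degrees lie in fixed arithmetic progressions (or are governed by prime gaps, as in the diameter~2 construction), so that after padding essentially every large degree becomes near-optimal and the limsup case collapses to the liminf one. Everything else is routine bookkeeping: checking that the floors in the choice of $d_1,d_2$ and the $+\delta$ term perturb $n_1n_2/d^{k_1+k_2}$ only by $O(1/d)$.
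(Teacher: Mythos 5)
Your proposal is correct and takes essentially the same route as the paper: the paper's proof likewise splits the degree in the ratio $k_1:k_2$ (taking ingredient graphs of degrees $k_1d$ and $k_2d$), stitches them with Theorem~\ref{thmGraphProduct}, and obtains (b) from (a) by the same algebraic cancellation. The liminf/limsup distinction you carefully flag is simply glossed over in the paper's two-line argument, so your treatment is, if anything, the more explicit one.
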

\begin{proof} \emph{(a)} Let $d>1$. For $i=1,2$ we may construct graphs $\Gamma_i$ of diameter $k_i$, degree $k_i\+d$ and order $L(k_i)(k_i\+d)^{k_i}+o(d^{k_i})$.
Theorem~\ref{thmGraphProduct} yields a product graph of diameter $k_1+k_2$, degree at most $(k_1+k_2)d+1$
and order $L(k_1)L(k_2)k_1^{k_1}k_2^{k_2}d^{k_1+k_2}+o(d^{k_1+k_2})$.

Part \emph{(b)} follows by straightforward algebraic manipulation.
\end{proof}
In particular, we note that the stitching construction of Theorem~\ref{thmGraphProduct} preserves lower bounds on the
$R$ values: $R(mk)\geq R(k)$ for every positive integer $m$.

We may use this idea to obtain better bounds for some particular diameters; for example
we may improve on the undirected diameter 4 construction in Theorem~\ref{thmUndirSmall}:
\begin{corollary}\label{corCC4}
  (a) $L^+_C(4)\geq\frac{169}{20736}\approx0.0081501$, and hence $R^+_C(4)>1.20185$.

  (b) $L^-_C(4)>0.0080194$, and hence $R^-_C(4)>1.19700$.
\end{corollary}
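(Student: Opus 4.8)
The plan is to obtain Corollary~\ref{corCC4} as a direct application of the stitching construction, combining two copies of the diameter-2 circulant graphs of Section~2 via Theorem~\ref{thmGraphProduct}; concretely, one applies Corollary~\ref{corGraphProduct} with $k_1=k_2=2$.

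For part \emph{(a)}, I would take $L=L^+_C$ in Corollary~\ref{corGraphProduct}\emph{(a)} with $k_1=k_2=2$, which gives
\[
  L^+_C(4)\;\geq\;\frac{L^+_C(2)^2\cdot 2^2\cdot 2^2}{4^4}\;=\;\frac{L^+_C(2)^2}{16}.
\]
Substituting Vetr\'ik's bound $L^+_C(2)\geq\tfrac{13}{36}$ yields $L^+_C(4)\geq\tfrac{1}{16}\bigl(\tfrac{13}{36}\bigr)^2=\tfrac{169}{20736}$. For the corresponding $R$-value I would use Corollary~\ref{corGraphProduct}\emph{(b)} (equivalently the remark $R(mk)\geq R(k)$ immediately following it) with $k_1=k_2=2$, which collapses to $R^+_C(4)\geq R^+_C(2)$; since $R^+_C(2)>1.20185$ was recorded in Section~2, part \emph{(a)} follows.

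For part \emph{(b)}, I would run the identical argument with $L=L^-_C$, now feeding in the lower bound from the proof of Theorem~\ref{thmVetrikPrimeGaps}: there the construction produces order $n=\tfrac{13}{36}d^2/(1+\delta)^2+O(d)$ with $\delta=0.004049$, so in fact $L^-_C(2)\geq\tfrac{13}{36}(1+\delta)^{-2}>0.358204$. Corollary~\ref{corGraphProduct}\emph{(a)} then gives $L^-_C(4)\geq\tfrac{1}{16}(0.358204)^2>0.0080194$, and $R^-_C(4)\geq R^-_C(2)>1.19700$ by Theorem~\ref{thmVetrikPrimeGaps}.

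There is no substantial obstacle here; the only subtlety is numerical. After squaring and dividing by $16$, the rounded bound $L^-_C(2)>0.35820$ quoted in the statement of Theorem~\ref{thmVetrikPrimeGaps} is not quite sharp enough to clear $0.0080194$, so in part \emph{(b)} I would carry the extra digit $L^-_C(2)>0.358204$ that comes directly from the error term in that theorem's proof; everything else is a one-line substitution into Corollary~\ref{corGraphProduct}.
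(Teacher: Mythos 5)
Your proposal is exactly the paper's proof: apply Corollary~\ref{corGraphProduct} with $k_1=k_2=2$, feeding in Vetr\'ik's $L^+_C(2)\geq\tfrac{13}{36}$ for part \emph{(a)} and the bound behind Theorem~\ref{thmVetrikPrimeGaps} for part \emph{(b)}, and you correctly spot that the rounded statement $L^-_C(2)>0.35820$ is not sharp enough for \emph{(b)}. The only flaw is that the extra digit you carry is still one short: $(0.358204)^2/16\approx 0.0080193816<0.0080194$, so your final inequality in \emph{(b)} fails as written. The remedy is to keep the exact constant, $L^-_C(2)\geq\frac{13}{36(1+\delta)^2}\approx 0.3582045$, which gives $L^-_C(4)\geq\frac{169}{20736(1+\delta)^4}\approx 0.008019404>0.0080194$ (the margin is only a few parts in $10^{9}$, so essentially the exact expression, or at least $L^-_C(2)>0.35820449$, is needed); the $R$-value claims via $R(2k)\geq R(2)$ are fine.
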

\begin{proof}
  For \emph{(a)} we note $R_C^+(2)\geq\frac{13}{36}$ from Vetr\'ik~\cite{Vetrik2014} and apply Corollary~\ref{corGraphProduct} with $k_1=k_2=2$.
	For \emph{(b)} we use the same method starting with Theorem~\ref{thmVetrikPrimeGaps}.
\end{proof}

The stitching process of Theorem~\ref{thmGraphProduct} can be iterated to produce a construction for any desired diameter, and Corollary~\ref{corGraphProduct}
then gives us a lower bound for the $R$ values for that diameter.
We illustrate the results for small diameter $k$ in Table~\ref{tabR}.
As an indicator of progress we show also the largest possible value of $R$ for a particular $k$, given by $R_{\max}(k)=k(k!)^{-1/k}$.

\begin{table}[ht]
	\centering\footnotesize
	  \setlength{\tabcolsep}{0.3em}
    \renewcommand{\arraystretch}{1.2}
    \begin{tabular}{r|l|l|l|l|l|l|l|l|}
      \cline{2-9}
                   & \multicolumn{8}{|c|}{Diameter ($k$)} \\
      \cline{2-9}
                   & \multicolumn{1}{|c|}{2} & \multicolumn{1}{|c|}{3} & \multicolumn{1}{|c|}{4}
									 & \multicolumn{1}{|c|}{5} & \multicolumn{1}{|c|}{6} & \multicolumn{1}{|c|}{7} & \multicolumn{1}{|c|}{8} & \multicolumn{1}{|c|}{9}\\
      \cline{2-9}
      $R_{\max}(k) \approx$ & $1.41421$ &   $1.65096$ &   $1.80720$ &   $1.91926$ &   $2.00415$ &   $2.07100$ &   $2.12520$ &   $2.17016$ \\
      \cline{2-9}
      $R^+_C(k) >$ &      $1.20185^a$ & $1.15455^d$ & $1.20185^c$ & $1.20431^d$ & $1.20185^f$ & $1.20360^f$ & $1.20185^f$ & $1.20321^f$ \\
      \cline{2-9}
      $R^-_C(k) >$ &      $1.19700^b$ & $1.14775^d$ & $1.19700^c$ & $1.20431^d$ & $1.19700^f$ & $1.20222^f$ & $1.19700^f$ & $1.20105^f$ \\
      \cline{2-9}
      $R^-_D(k) >$ &      $1.22474^e$ & $1.24805^e$ & $1.26588^e$ & $1.25881^e$ & $1.27378^e$ & $1.26436^e$ & $1.26588^f$ & $1.26514^f$ \\
      \cline{2-9}
    \end{tabular}
  \caption{The best $R$ values for diameter $k\leq 9$\\
	$a$. Vetr\'ik~\cite{Vetrik2014};
	$b$. Theorem~\ref{thmVetrikPrimeGaps};
	$c$. Corollary~\ref{corCC4};\\
	$d$. Theorem~\ref{thmUndirSmall};
	$e$. Theorem~\ref{thmDirSmall};
	$f$. Corollary~\ref{corGraphProduct}
	}
  \label{tabR}
\end{table}

It is worth noting that the method of Corollary~\ref{corGraphProduct} may be used to produce values of $R$ which are larger than those achievable
from the direct product constructions of Section~\ref{sectDirectProd}.
For example, the limitations noted in Observation~\ref{thmOverallLowerBound} show that the maximum
possible value of $R^-_D(10)$ we could achieve using Theorem~\ref{thmDir} is approximately 1.26699. However, combining the results for diameters
4 and 6 in Table~\ref{tabR} yields $R^-_D(10)>1.27061$.

Next we use our previous results to show that $R$ is well-behaved in the limit.
\begin{theorem}\label{thmGraphProductR}
  Let $L(k)$ be one of $L^-_C(k)$, $L^+_C(k)$, $L^-_D(k)$ or $L^+_D(k)$, and let $R(k) = k\+L(k)^{1/k}$.
  The limit $R=\liminfty[k]R(k)$ exists and is equal to $\sup R(k)$.
\end{theorem}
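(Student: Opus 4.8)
The plan is to take logarithms and reduce the claim to Fekete's lemma for superadditive sequences. Define $f(k)=k\log R(k)=\log\!\big(k^{k}L(k)\big)$; this is a well-defined real number for every positive integer $k$, since in all four cases the trivial lower bound (Observation~\ref{obsTrivialLB}) gives $L(k)\geq L^-_C(k)\geq k^{-k}>0$. Taking logarithms in Corollary~\ref{corGraphProduct}(b) shows that $f$ is \emph{superadditive}: $f(k_1+k_2)\geq f(k_1)+f(k_2)$ for all positive integers $k_1,k_2$. Moreover $f(k)/k=\log R(k)$ is bounded above: by the trivial upper bound (Observation~\ref{obsTrivialUB}), $L(k)\leq L^+_D(k)\leq 1/k!$, so $R(k)=k\+L(k)^{1/k}\leq k(k!)^{-1/k}=R_{\max}(k)$, and since $k^{k}/k!<e^{k}$ we get $R(k)<e$, hence $\log R(k)<1$ for every $k$.

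I would then apply Fekete's lemma: for a superadditive sequence the limit $\liminfty[k]f(k)/k$ exists and equals $\sup_k f(k)/k$, and since the terms are bounded above this supremum is finite. (If one prefers not to cite it, the standard proof suffices: fixing $k_0$ and writing $k=qk_0+s$ with $1\leq s\leq k_0$, iterated superadditivity gives $f(k)\geq q\+f(k_0)+f(s)$; dividing by $k$ and letting $k\to\infty$ with $k_0$ fixed yields $\liminfinfty[k]f(k)/k\geq f(k_0)/k_0$, and since $k_0$ was arbitrary, $\liminfinfty[k]f(k)/k\geq\sup_k f(k)/k\geq\limsupinfty[k]f(k)/k$.) Thus $\liminfty[k]\log R(k)=\sup_k\log R(k)$, a finite number less than $1$.

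Finally, since $t\mapsto e^{t}$ is continuous and strictly increasing, $\liminfty[k]R(k)=\exp\!\big(\sup_k\log R(k)\big)=\sup_k\exp(\log R(k))=\sup_k R(k)$, which is exactly the assertion (with the extra information $R\leq e$, consistent with $R_{\max}(k)\to e$ noted earlier). There is essentially no obstacle here; the only point needing care is that Fekete's lemma on its own permits the limit to be $+\infty$, so one must invoke the trivial upper bound to guarantee that $R$ is a genuine finite limit rather than a divergent supremum.
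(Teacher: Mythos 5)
Your proof is correct and is essentially the paper's argument: the paper carries out the same Fekete-type decomposition $k=mk_0+j$ directly in multiplicative form, using $R(mk_0)\geq R(k_0)$ and $R(j)\geq 1$ from Corollary~\ref{corGraphProduct}(b), together with the bound $R(k)\leq e$ to ensure the supremum is finite. Passing to logarithms and invoking (or re-proving) Fekete's lemma for the superadditive sequence $f(k)=k\log R(k)$ is just a repackaging of that same argument.
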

\begin{proof}
$R(k)$ is bounded above (by $e$), so $s=\sup R(k)$ is finite.
  Hence, given $\varepsilon>0$, we can choose $k$ so that $s-R(k)<\varepsilon/2$.
  By Corollary~\ref{corGraphProduct}(b), $R(mk)\geq R(k)$ for every positive integer $m$.
  Moreover, for any fixed $j<k$, since $R(j)\geq 1$, we have
  $R(mk+j) \geq R(k)^{mk/(mk+j)} \geq R(k)^{m/(m+1)}$,
  which, by choosing $m$ large enough, can be made to
  differ from $R(k)$ by no more than $\varepsilon/2$.
\end{proof}
\begin{corollary}\label{corGraphProductR}~
\vspace*{-1em}
\begin{enumerate}[(a)]
  \item $\displaystyle\liminfty[k]R^-_C(k) \geq \frac{5\times 109^{1/5}}{7\times 2^{3/5}} > 1.20431$
  \item $\displaystyle\liminfty[k]R^-_D(k) \geq \frac{3^{7/6}}{2^{3/2}} > 1.27378$
\end{enumerate}
\end{corollary}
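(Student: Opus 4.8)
The plan is to combine Theorem~\ref{thmGraphProductR} with the best small-diameter bounds already established in Section~\ref{sectDirectProd}. Theorem~\ref{thmGraphProductR} tells us that for each of the quantities in question, $\lim_{k\to\infty}R(k)=\sup_k R(k)$; in particular the limit is bounded below by $R(k_0)$ for any single value $k_0$ we care to pick. So the whole corollary reduces to choosing the right $k_0$ and reading off the corresponding value in closed form.

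For part~(a) I would take $k_0=5$ and invoke Theorem~\ref{thmUndirSmall}(c), which gives $L^-_C(5)\geq \frac{109}{134456}$. The key observation is the factorisation $134456 = 2^3\cdot 7^5$, so that $R^-_C(5)=5\,L^-_C(5)^{1/5}\geq \dfrac{5\cdot 109^{1/5}}{7\cdot 2^{3/5}}$, which is exactly the claimed bound; a short numerical estimate then confirms it exceeds $1.20431$. Since $R^-_C(5)$ is one of the terms over which the supremum in Theorem~\ref{thmGraphProductR} is taken, this lower bound passes to the limit.

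For part~(b) I would take $k_0=6$ and invoke Theorem~\ref{thmDirSmall}(e), which gives $L^-_D(6)\geq \frac{3}{32768}=\frac{3}{2^{15}}$. Then $R^-_D(6)=6\,L^-_D(6)^{1/6}\geq \dfrac{6\cdot 3^{1/6}}{2^{15/6}}$, and simplifying $\dfrac{6}{2^{15/6}}=\dfrac{6}{2^{5/2}}=\dfrac{3}{2^{3/2}}$ yields $R^-_D(6)\geq \dfrac{3^{7/6}}{2^{3/2}}$. Again a numerical check gives $\dfrac{3^{7/6}}{2^{3/2}}>1.27378$, and Theorem~\ref{thmGraphProductR} carries this to the limit.

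There is essentially no difficult step here; the one thing to be careful about is spotting the clean prime factorisations $134456=2^3\cdot 7^5$ and $32768=2^{15}$ (which is precisely why diameters $5$ and $6$ are the ones producing these particular closed-form constants), together with confirming the two numerical inequalities. One should also note that nothing beyond these two diameters is needed: neither another choice of $k_0$ nor the stitching of Corollary~\ref{corGraphProduct} improves matters for the limiting constant, since $R^-_C(5)$ and $R^-_D(6)$ already realise the stated bounds and Theorem~\ref{thmGraphProductR} does the rest.
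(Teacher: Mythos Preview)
Your proof is correct and follows exactly the paper's own argument: apply Theorem~\ref{thmGraphProductR} to reduce to a single diameter, then take $k_0=5$ with Theorem~\ref{thmUndirSmall}(c) for part~(a) and $k_0=6$ with Theorem~\ref{thmDirSmall}(e) for part~(b), using the factorisations $134456=2^3\cdot 7^5$ and $32768=2^{15}$ to obtain the closed forms.
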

\begin{proof}
  We choose the largest entry in the relevant row in Table~\ref{tabR}.
	For \emph{(a)} we know from Theorem~\ref{thmUndirSmall} that $L^-_C(5)\geq \displaystyle\frac{109}{2^3\times 7^5}$.
  For \emph{(b)} we know from Theorem~\ref{thmDirSmall} that $L^-_D(6)\geq \displaystyle\frac{3}{2^{15}}$.
\end{proof}
We conclude this section by using the foregoing to derive new lower bounds for the maximum possible orders of circulant graphs of
given diameter and
sufficiently
large degree.
\Needspace*{3\baselineskip}
\begin{corollary}\label{corOrders}
~
\vspace*{-1em}
\begin{enumerate}[(a)]
  \item For any diameter $k\geq2$ and any degree $d$ large enough, $CC(d,k)>\left(1.14775\+\frac{d}{k}\right)^k$.
  \item For any diameter $k$ that is a multiple of 5 or sufficiently large, and any degree $d$ large enough, $CC(d,k)>\left(1.20431\+\frac{d}{k}\right)^k$.
  \item For any diameter $k\geq2$ and any degree $d$ large enough, $DCC(d,k)>\left(1.22474\+\frac{d}{k}\right)^k$.
  \item For any diameter $k$ that is a multiple of 6 or sufficiently large, and any degree $d$ large enough, $DCC(d,k)>\left(1.27378\+\frac{d}{k}\right)^k$.
\end{enumerate}
\end{corollary}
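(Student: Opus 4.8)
The plan is to reduce each of the four claimed inequalities to a lower bound on the corresponding $R$-value, and then to assemble those bounds from results already established. From $L^-_C(k)=\liminf_{d\to\infty}CC(d,k)/d^k$ together with $R^-_C(k)=k\,L^-_C(k)^{1/k}$ we have $L^-_C(k)=(R^-_C(k)/k)^k$, so a bound $R^-_C(k)>c$ is equivalent to $\liminf_{d}CC(d,k)/d^k>(c/k)^k$, which forces $CC(d,k)>(c\,d/k)^k$ for all sufficiently large $d$; the identical reasoning applies to the directed invariant. Hence it suffices to prove: (a) $R^-_C(k)>1.14775$ for every $k\geq2$; (b) $R^-_C(k)>1.20431$ whenever $5\mid k$ or $k$ is large enough; (c) $R^-_D(k)>1.22474$ for every $k\geq2$; (d) $R^-_D(k)>1.27378$ whenever $6\mid k$ or $k$ is large enough.

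For (a) and (c) I would use that the stitching estimate of Corollary~\ref{corGraphProduct}(b) iterates to $R(k)\geq\bigl(\prod_{i}R(k_i)^{k_i}\bigr)^{1/k}$ whenever $k=k_1+\dots+k_n$. Every integer $k\geq2$ can be written as $k=2a+3b$ with $a,b\geq0$; taking $a$ summands of diameter $2$ and $b$ of diameter $3$ gives, in case (a), $R^-_C(k)\geq\bigl(R^-_C(2)^{2a}R^-_C(3)^{3b}\bigr)^{1/k}$, and since both $R^-_C(2)>1.19700$ (Theorem~\ref{thmVetrikPrimeGaps}) and $R^-_C(3)>1.14775$ (Theorem~\ref{thmUndirSmall}(a)) strictly exceed $1.14775$, so does the weighted geometric mean. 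Case (c) is the same argument run with $R^-_D(2)>1.22474$ (Theorem~\ref{thmDirSmall}(a)) and $R^-_D(3)>1.24805$ (Theorem~\ref{thmDirSmall}(b)).

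For (b) and (d) I would split into two regimes. When $5\mid k$, Corollary~\ref{corGraphProduct}(b) applied with all summands equal gives $R^-_C(k)\geq R^-_C(5)>1.20431$ (Theorem~\ref{thmUndirSmall}(c)), and likewise $R^-_D(k)\geq R^-_D(6)>1.27378$ (Theorem~\ref{thmDirSmall}(e)) when $6\mid k$. When $k$ is merely large, Theorem~\ref{thmGraphProductR} tells us that $R=\lim_{k\to\infty}R^-_C(k)$ exists and equals $\sup_k R^-_C(k)$, while Corollary~\ref{corGraphProductR}(a) gives $R\geq\frac{5\times109^{1/5}}{7\times2^{3/5}}>1.20431$; since $R^-_C(k)\to R$, we get $R^-_C(k)>1.20431$ for all large $k$. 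The directed statement follows in the same way from Corollary~\ref{corGraphProductR}(b), which gives $R\geq\frac{3^{7/6}}{2^{3/2}}>1.27378$.

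I do not anticipate any genuine obstacle beyond fitting the pieces together; the only point needing a little care is keeping the inequalities \emph{strict} in parts (a) and (c), since $R^-_C(3)$ only barely exceeds its target $1.14775$. This is taken care of by noting that in any representation $k=2a+3b$ with $k\geq2$ the weights $2a/k$ and $3b/k$ are nonnegative and sum to $1$, so a weighted geometric mean of the two bases, each of which strictly exceeds the target, is itself strictly above the target, with no degenerate case arising.
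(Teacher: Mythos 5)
Your proposal is correct and follows essentially the same route as the paper: reduce each inequality to a lower bound on the relevant $R$-value, handle arbitrary $k$ (and multiples of 5 or 6) via iterated stitching from Corollary~\ref{corGraphProduct}(b) applied to the bounds of Theorems~\ref{thmVetrikPrimeGaps}, \ref{thmUndirSmall} and~\ref{thmDirSmall}, and handle sufficiently large $k$ via Theorem~\ref{thmGraphProductR} together with Corollary~\ref{corGraphProductR}. Your explicit $k=2a+3b$ decomposition and the strictness check simply spell out what the paper summarises by saying $R^-_C(k)$ (resp.\ $R^-_D(k)$) always exceeds the smallest entry in its row of Table~\ref{tabR}.
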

\begin{proof}~

\begin{enumerate}[(a)]
  \item From Theorem~\ref{thmGraphProductR} and Corollary~\ref{corGraphProduct}, we know that $R^-_C(k)$ always exceeds the smallest value in the $R^-_C$
	row of Table~\ref{tabR}, which is $1.14775$.
	\item For $k$ a multiple of 5, we know from Theorem~\ref{thmUndirSmall} and Corollary~\ref{corGraphProduct} that $R^-_C(k)>1.20431$.
	The result for sufficiently large $k$ follows from Corollary~\ref{corGraphProductR}.
	\item and (d) follow by using similar logic in the directed case.\qedhere
\end{enumerate}
\end{proof}
These represent significant improvements over the trivial bound of $\displaystyle\left(\frac{d}{k}\right)^k$.

\section{Sumsets covering \texorpdfstring{$\mathbb{Z}_n$}{Zn}}\label{sectSumsets}
Our constructions of directed circulant graphs can be used to obtain an upper bound on the minimum size,
$SS(n,k)$, of a set $A\subset\bbZ_n$ for which the sumset
\[
k\+A \;=\; \underbrace{A+A+\ldots+A}_k \;=\; \bbZ_n.
\]
The trivial bound is $SS(n,k)\leq kn^{1/k}$ which follows in the same way as the trivial lower bound for the directed circulant graph 
(see Observation~\ref{obsTrivialLB}).
Improvements to this trivial bound do not appear to have been investigated in the literature.

The idea is that, given $S\subseteq\bbZ_n$ such that $\Cay(\bbZ_n,S)$ has diameter $k$, if we let $A=S\cup\{0\}$ then $kA=\bbZ_n$.
Our constructions thus enable us to bound $SS(n,k)$ for fixed $k$ and infinitely many values of $n$.
For example, if we let $L^-_S(k)=\liminfinfty SS(n,k)/n^{1/k}$, then
the following new result for $k=2$ follows from Theorem~\ref{thmDirSmall}(a):
\begin{corollary}\label{corSS2}
  $L^-_S(2) \leq \sqrt{\frac{8}{3}} \approx 1.63299$.
\end{corollary}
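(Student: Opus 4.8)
The plan is to feed the directed diameter-$2$ graphs of Theorem~\ref{thmDirSmall}(a) into the reduction sketched just above the statement. Concretely, Theorem~\ref{thmDirSmall}(a) supplies an infinite set $D$ of degrees together with, for each $d\in D$, a directed circulant graph $\Cay(\bbZ_{n_d},S_d)$ of diameter $2$ with $|S_d|=d$ and $n_d\geq\frac{3}{8}(d+1)(d-7)$. First I would set $A_d=S_d\cup\{0\}$; since $0\notin S_d$ by the definition of a Cayley graph, $|A_d|=d+1$, and since the graph has diameter $2$ every element of $\bbZ_{n_d}$ is a sum of at most two elements of $S_d$, hence — padding with $0$ where necessary — of exactly two elements of $A_d$. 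Thus $2A_d=\bbZ_{n_d}$, so $SS(n_d,2)\leq d+1$.

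Next I would invert the order bound. From $n_d\geq\frac{3}{8}(d+1)(d-7)=\frac{3}{8}(d+1)^2-\tfrac{15}{4}(d+1)+\tfrac{21}{2}$ it follows that $d+1\leq\sqrt{\tfrac{8}{3}\,n_d}+O(1)$ as $d\to\infty$, and therefore
\[
  \frac{SS(n_d,2)}{\sqrt{n_d}} \;\leq\; \frac{d+1}{\sqrt{n_d}} \;\leq\; \sqrt{\tfrac{8}{3}}+o(1).
\]
Since $D$ is infinite, the values $n_d$ form a sequence tending to infinity along which $SS(n_d,2)/\sqrt{n_d}\leq\sqrt{8/3}+o(1)$; as $L^-_S(2)=\liminfinfty SS(n,2)/n^{1/2}$ is a liminf over all $n$, it is at most the limit superior of this subsequence, which is $\sqrt{8/3}$. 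This gives the claimed bound.

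I do not expect a genuine obstacle here. The only point needing a little care is that the construction only controls $SS(n,2)$ for the special values $n=n_d$, so one must use that a liminf over all $n$ is bounded by the behaviour along any infinite subsequence — which is precisely why the statement is phrased in terms of $L^-_S$ rather than a bound valid for every $n$. Optionally I would note that the identical argument applied to the other parts of Theorem~\ref{thmDirSmall} (together with Corollary~\ref{corGraphProduct}) yields $L^-_S(k)\leq k/R^-_D(k)$ for every $k\geq2$, improving on the trivial $SS(n,k)\leq kn^{1/k}$.
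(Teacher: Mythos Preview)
Your proposal is correct and follows essentially the same approach as the paper, which simply states that the result follows from Theorem~\ref{thmDirSmall}(a) via the reduction $A=S\cup\{0\}$ described just before the corollary. You have merely spelled out the details the paper leaves implicit, including the inversion of the order bound and the observation that a liminf over all $n$ is bounded above by the limiting value along any infinite subsequence of orders.
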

More generally, Corollary~\ref{corGraphProductR} shows that for large enough $k$ and infinitely many values of $n$, $SS(n,k)$ is at least 21 percent smaller than the trivial bound:
\begin{corollary}\label{corSSR}
  $\liminfty[k]k^{-1}L^-_S(k) \leq \frac{2^{3/2}}{3^{7/6}} \approx 0.78506$.
\end{corollary}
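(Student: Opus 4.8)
The plan is to deduce Corollary~\ref{corSSR} directly from Corollary~\ref{corGraphProductR}(b), which already establishes $\liminfty[k] R^-_D(k) \geq 3^{7/6}/2^{3/2} > 1.27378$, together with the elementary relationship between directed circulant graphs of diameter $k$ and sets $A\subseteq\bbZ_n$ with $kA=\bbZ_n$ noted at the start of this section.

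First I would make the translation precise. If $\Cay(\bbZ_n,S)$ has diameter $k$, then setting $A=S\cup\{0\}$ gives $kA=\bbZ_n$ and $|A|\leq |S|+1 = d+1$, where $d$ is the degree. Hence $SS(n,k)\leq d+1$. Running this the other way: for the infinitely many values of $d$ for which our directed constructions (and their stitchings) produce a graph of diameter $k$, degree $d$, and order $n = DCC(d,k)$ essentially achieving $L^-_D(k)\,d^k$ asymptotically, we obtain infinitely many $n$ with $SS(n,k)\leq d+1$ and $n = L^-_D(k)\,d^k + o(d^k)$. Solving for $d$ in terms of $n$ gives $d = \bigl(n/L^-_D(k)\bigr)^{1/k}(1+o(1))$, so along this subsequence $SS(n,k)/n^{1/k} \to L^-_D(k)^{-1/k}$. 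Taking the liminf over all $n$ therefore yields $L^-_S(k) \leq L^-_D(k)^{-1/k}$, i.e. $k^{-1}L^-_S(k) \leq \bigl(k\,L^-_D(k)^{1/k}\bigr)^{-1} = R^-_D(k)^{-1}$.

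Then I would pass to the limit. Since $R^-_D(k)^{-1}$ is the reciprocal of a quantity whose liminf (indeed limit, by Theorem~\ref{thmGraphProductR}) is at least $3^{7/6}/2^{3/2}$, we get
\[
\liminfty[k] k^{-1}L^-_S(k) \;\leq\; \limsupinfty[k] R^-_D(k)^{-1} \;=\; \Bigl(\liminfty[k]R^-_D(k)\Bigr)^{-1} \;\leq\; \frac{2^{3/2}}{3^{7/6}} \;\approx\; 0.78506,
\]
as claimed. (One should be slightly careful about which limit/liminf is taken where, but since the relevant limit of $R^-_D(k)$ exists by Theorem~\ref{thmGraphProductR}, there is no real difficulty.)

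The only genuine subtlety — the step I would flag as needing care rather than being truly hard — is the bookkeeping around "infinitely many $n$" versus "all $n$". The definition $L^-_S(k)=\liminfinfty SS(n,k)/n^{1/k}$ is a liminf over \emph{all} $n$, whereas our constructions only control $SS(n,k)$ for $n$ in a sparse (though infinite) set; but since a liminf is bounded above by the limit along any subsequence, restricting to the construction's values of $n$ is exactly what is permitted, and the $o(d^k)$ error terms in the orders translate into a $1+o(1)$ factor that vanishes in the limit. Everything else is the same reciprocal-and-root algebra already used to pass between $L$ and $R$ values elsewhere in the paper, so no new ideas are required.
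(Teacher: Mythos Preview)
Your proposal is correct and is precisely the argument the paper intends: the paper gives no explicit proof, merely pointing to Corollary~\ref{corGraphProductR}, and you have spelled out the straightforward translation $L^-_S(k)\leq L^-_D(k)^{-1/k}=k/R^-_D(k)$ that connects the sumset quantity to the directed-graph quantity, then invoked Corollary~\ref{corGraphProductR}(b) (and Theorem~\ref{thmGraphProductR} for existence of the limit) exactly as the paper indicates. Your caveat about $\lim$ versus $\limsup$ is well taken---strictly speaking only $\limsup_k k^{-1}L^-_S(k)\leq 2^{3/2}/3^{7/6}$ follows, and the paper is being slightly informal in writing $\lim$---but this is an imprecision in the original statement rather than a gap in your argument.
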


\section{Largest graphs of small degree and diameter}\label{sectTables}
We can use the construction of Theorem~\ref{thmGraphProduct} to obtain large undirected circulant graphs for small degrees and diameters.
Recently in~\cite{Feria-Puron2015}, Feria-Puron, P\'erez-Ros\'es and Ryan published a table of largest known circulant graphs
with degree up to 16 and diameter up to 10.
Their method uses a construction based on graph Cartesian products which is somewhat similar to ours.
In contrast, however, Theorem~\ref{thmGraphProduct} does not in general result in a graph isomorphic to the Cartesian product of the constituents.
Furthermore, our construction does not require the constituent graph orders to be coprime, which allows more graphs to be constructed.

Using Theorem~\ref{thmGraphProduct} allowed us to improve many of the entries in the published table.
However, at the same time we developed a computer search which allows us to find circulant graphs of given degree, diameter and order.
It turns out that this search is able to find larger graphs (at least in the range $d\leq 16,k\leq 10$) than the Theorem~\ref{thmGraphProduct} method.
We therefore present a much improved table of largest known circulant graphs based on the outputs of this search.

In Table~\ref{tabBestCirc}, we show the largest known circulant graphs of degree $d \leq 16$ and diameter $k \leq 10$.
In Table~\ref{tab:search} we give a reduced generating set for each new record largest graph found by the search.
The computer search has been completed as an exhaustive search in the diameter 2 case up to degree 23,
and these results are included in Table~\ref{tab:search} for completeness.

\begin{table}[h]
\centering\small
\begin{tabular}[t]{|r|rrrrrrrrrr|}
\hline
$d\setminus k$ & 1 & 2 & 3 & 4 & 5 & 6 & 7 & 8 & 9 & 10\\
\hline
2 & \ov{3} & \ov{5} & \ov{7} & \ov{9} & \ov{11} & \ov{13} & \ov{15} & \ov{17} & \ov{19} & \ov{21}\\
3 & \ov{4} & \ov{8} & \ov{12} & \ov{16} & \ov{20} & \ov{24} & \ov{28} & \ov{32} & \ov{36} & \ov{40}\\
4 & \ov{5} & \ov{13} & \ov{25} & \ov{41} & \ov{61} & \ov{85} & \ov{113} & \ov{145} & \ov{181} & \ov{221}\\
5 & \ov{6} & \ov{16} & \ov{36} & \ov{64} & \ov{100} & \ov{144} & \ov{196} & \ov{256} & \ov{324} & \ov{400}\\
6 & \ov{7} & \ov{21} & \ov{55} & \ov{117} & \ov{203} & \ov{333} & \ov{515} & \ov{737} & \ov{1027} & \ov{1393}\\
7 & \ov{8} & \ov{26} & \ov{76} & \ov{160} & \ov{308} & \ov{536} & \ov{828} & \ov{1232} & \ov{1764} & \ov{2392}\\
8 & \ov{9} & \ov{35} & \ov{104} & \ov{248} & \ov{528} & \ov{984} & \ov{1712} & \ov{2768} & \ov{4280} & \ov{6320}\\
9 & \ov{10} & \ov{42} & \ov{130} & \ov{320} & \ov{700} & \ov{1416} & \ov{2548} & \ov{4304} & \ov{6804} & \ov{10320}\\
10 & \ov{11} & \ov{51} & \ov{177} & \ov{457} & \ov{1099} & \nv{2380} & \nv{4551} & \nv{8288} & \nv{14099} & \nv{22805}\\
11 & \ov{12} & \ov{56} & \ov{210} & \ov{576} & \nv{1428} & \nv{3200} & \nv{6652} & \nv{12416} & \nv{21572} & \nv{35880}\\
12 & \ov{13} & \ov{67} & \ov{275} & \nv{819} & \nv{2040} & \nv{4283} & \nv{8828} & \nv{16439} & \nv{29308} & \nv{51154}\\
13 & \ov{14} & \ov{80} & \ov{312} & \nv{970} & \nv{2548} & \nv{5598} & \nv{12176} & \nv{22198} & \nv{40720} & \nv{72608}\\
14 & \ov{15} & \ov{90} & \ov{381} & \nv{1229} & \nv{3244} & \nv{7815} & \nv{17389} & \nv{35929} & \nv{71748} & \nv{126109}\\
15 & \ov{16} & \ov{96} & \ov{448} & \nv{1420} & \nv{3980} & \nv{9860} & \nv{22584} & \nv{48408} & \nv{93804} & \nv{177302}\\
16 & \ov{17} & \ov{112} & \nv{518} & \nv{1717} & \nv{5024} & \nv{13380} & \nv{32731} & \nv{71731} & \nv{148385} & \nv{298105}\\
\hline
\end{tabular}
\captionsetup{justification=justified}
\caption{Largest known circulant graphs of degree $d\leq 16$ and diameter $k\leq 10$ \\
$\dagger$ new record largest value}
\label{tabBestCirc}
\end{table}

\begin{table}
\centering\footnotesize
\begin{tabular}[t]{|rrrl|}
\hline
$d$ & $k$ & Order & Generators\\
\hline
$6$ & $2$ & \mv{21} & $1,2,8$\\
$6$ & $3$ & \mv{55} & $1,5,21$\\
$6$ & $4$ & \mv{117} & $1,16,22$\\
$6$ & $5$ & \mv{203} & $1,7,57$\\
$6$ & $6$ & \mv{333} & $1,9,73$\\
$6$ & $7$ & \mv{515} & $1,46,56$\\
$6$ & $8$ & \mv{737} & $1,11,133$\\
$6$ & $9$ & \mv{1027} & $1,13,157$\\
$6$ & $10$ & \mv{1393} & $1,92,106$\\
$7$ & $2$ & \mv{26} & $1,2,8$\\
$7$ & $3$ & \mv{76} & $1,27,31$\\
$7$ & $4$ & \mv{160} & $1,5,31$\\
$7$ & $5$ & \mv{308} & $1,7,43$\\
$7$ & $6$ & \mv{536} & $1,231,239$\\
$7$ & $7$ & \mv{828} & $1,9,91$\\
$7$ & $8$ & \mv{1232} & $1,11,111$\\
$7$ & $9$ & \mv{1764} & $1,803,815$\\
$7$ & $10$ & \mv{2392} & $1,13,183$\\
$8$ & $2$ & \mv{35} & $1,6,7,10$\\
$8$ & $3$ & \mv{104} & $1,16,20,27$\\
$8$ & $4$ & \mv{248} & $1,61,72,76$\\
$8$ & $5$ & \mv{528} & $1,89,156,162$\\
$8$ & $6$ & \mv{984} & $1,163,348,354$\\
$8$ & $7$ & \mv{1712} & $1,215,608,616$\\
$8$ & $8$ & \pv{2768} & $1,345,1072,1080$\\
$8$ & $9$ & \pv{4280} & $1,429,1660,1670$\\
$8$ & $10$ & \pv{6320} & $1,631,2580,2590$\\
$9$ & $2$ & \mv{42} & $1,5,14,17$\\
$9$ & $3$ & \mv{130} & $1,8,14,47$\\
$9$ & $4$ & \mv{320} & $1,15,25,83$\\
$9$ & $5$ & \mv{700} & $1,5,197,223$\\
$9$ & $6$ & \pv{1416} & $1,7,575,611$\\
$9$ & $7$ & \pv{2548} & $1,7,521,571$\\
$9$ & $8$ & \pv{4304} & $1,9,1855,1919$\\
$9$ & $9$ & \pv{6804} & $1,9,1849,1931$\\
$9$ & $10$ & \pv{10320} & $1,11,4599,4699$\\
$10$ & $2$ & \mv{51} & $1,2,10,16,23$\\
$10$ & $3$ & \mv{177} & $1,12,19,27,87$\\
$10$ & $4$ & \mv{457} & $1,20,130,147,191$\\
$10$ & $5$ & \mv{1099} & $1,53,207,272,536$\\
$10$ & $6$ & \pv{2380} & $1,555,860,951,970$\\
$10$ & $7$ & \pv{4551} & $1,739,1178,1295,1301$\\
$10$ & $8$ & \pv{8288} & $1,987,2367,2534,3528$\\
$10$ & $9$ & \pv{14099} & $1,1440,3660,3668,6247$\\
$10$ & $10$ & \pv{22805} & $1,218,1970,6819,6827$\\
$11$ & $2$ & \mv{56} & $1,2,10,15,22$\\
$11$ & $3$ & \mv{210} & $1,49,59,84,89$\\
$11$ & $4$ & \mv{576} & $1,9,75,155,179$\\
$11$ & $5$ & \pv{1428} & $1,169,285,289,387$\\
$11$ & $6$ & \pv{3200} & $1,259,325,329,1229$\\
$11$ & $7$ & \pv{6652} & $1,107,647,2235,2769$\\
$11$ & $8$ & \pv{12416} & $1,145,863,4163,5177$\\
$11$ & $9$ & \pv{21572} & $1,663,6257,10003,10011$\\
\hline
\multicolumn{4}{c}{(Table continues on next page)}
\end{tabular}
\end{table}

\begin{table}
\centering\footnotesize
\begin{tabular}[t]{|rrrl|}
\hline
$d$ & $k$ & Order & Generators\\
\hline
$11$ & $10$ & \pv{35880} & $1,2209,5127,5135,12537$\\
$12$ & $2$ & \mv{67} & $1,2,3,13,21,30$\\
$12$ & $3$ & \mv{275} & $1,16,19,29,86,110$\\
$12$ & $4$ & \pv{819} & $7,26,119,143,377,385$\\
$12$ & $5$ & \pv{2040} & $1,20,24,152,511,628$\\
$12$ & $6$ & \pv{4283} & $1,19,100,431,874,1028$\\
$12$ & $7$ & \pv{8828} & $1,29,420,741,2727,3185$\\
$12$ & $8$ & \pv{16439} & $1,151,840,1278,2182,2913$\\
$12$ & $9$ & \pv{29308} & $1,219,1011,1509,6948,8506$\\
$12$ & $10$ & \pv{51154} & $1,39,1378,3775,5447,24629$\\
$13$ & $2$ & \mv{80} & $1,3,9,20,25,33$\\
$13$ & $3$ & \mv{312} & $1,14,74,77,130,138$\\
$13$ & $4$ & \pv{970} & $1,23,40,76,172,395$\\
$13$ & $5$ & \pv{2548} & $1,117,121,391,481,1101$\\
$13$ & $6$ & \pv{5598} & $1,12,216,450,1204,2708$\\
$13$ & $7$ & \pv{12176} & $1,45,454,1120,1632,1899$\\
$13$ & $8$ & \pv{22198} & $1,156,1166,2362,5999,9756$\\
$13$ & $9$ & \pv{40720} & $1,242,3091,4615,5162,13571$\\
$13$ & $10$ & \pv{72608} & $1,259,4815,8501,8623,23023$\\
$14$ & $2$ & \mv{90} & $1,4,10,17,26,29,41$\\
$14$ & $3$ & \mv{381} & $1,11,103,120,155,161,187$\\
$14$ & $4$ & \pv{1229} & $1,8,105,148,160,379,502$\\
$14$ & $5$ & \pv{3244} & $1,108,244,506,709,920,1252$\\
$14$ & $6$ & \pv{7815} & $1,197,460,696,975,2164,3032$\\
$14$ & $7$ & \pv{17389} & $1,123,955,1683,1772,2399,8362$\\
$14$ & $8$ & \pv{35929} & $1,796,1088,3082,3814,13947,14721$\\
$14$ & $9$ & \pv{71748} & $1,1223,3156,4147,5439,11841,25120$\\
$14$ & $10$ & \pv{126109} & $1,503,4548,7762,9210,9234,49414$\\
$15$ & $2$ & \mv{96} & $1,2,3,14,21,31,39$\\
$15$ & $3$ & \mv{448} & $1,10,127,150,176,189,217$\\
$15$ & $4$ & \pv{1420} & $1,20,111,196,264,340,343$\\
$15$ & $5$ & \pv{3980} & $1,264,300,382,668,774,1437$\\
$15$ & $6$ & \pv{9860} & $1,438,805,1131,1255,3041,3254$\\
$15$ & $7$ & \pv{22584} & $1,1396,2226,2309,2329,4582,9436$\\
$15$ & $8$ & \pv{48408} & $1,472,2421,3827,4885,5114,12628$\\
$15$ & $9$ & \pv{93804} & $1,3304,4679,9140,10144,10160,13845$\\
$15$ & $10$ & \pv{177302} & $1,2193,8578,18202,23704,23716,54925$\\
$16$ & $2$ & \mv{112} & $1,4,10,17,29,36,45,52$\\
$16$ & $3$ & \pv{518} & $1,8,36,46,75,133,183,247$\\
$16$ & $4$ & \pv{1717} & $1,46,144,272,297,480,582,601$\\
$16$ & $5$ & \pv{5024} & $1,380,451,811,1093,1202,1492,1677$\\
$16$ & $6$ & \pv{13380} & $1,395,567,1238,1420,1544,2526,4580$\\
$16$ & $7$ & \pv{32731} & $1,316,1150,1797,2909,4460,4836,16047$\\
$16$ & $8$ & \pv{71731} & $1,749,4314,7798,10918,11338,11471,25094$\\
$16$ & $9$ & \pv{148385} & $1,6094,6964,10683,11704,14274,14332,54076$\\
$16$ & $10$ & \pv{298105} & $1,5860,11313,15833,21207,26491,26722,99924$\\
$17$ & $2$ & \mv{130} & $1,7,26,37,47,49,52,61$\\
$18$ & $2$ & \mv{138} & $1,9,12,15,22,42,27,51,68$\\
$19$ & $2$ & \mv{156} & $1,15,21,23,26,33,52,61,65$\\
$20$ & $2$ & \mv{171} & $1,11,31,36,37,50,54,47,65,81$\\
$21$ & $2$ & \mv{192} & $1,3,15,23,32,51,57,64,85,91$\\
$22$ & $2$ & \mv{210} & $2,7,12,18,32,35,63,70,78,91,92$\\
$23$ & $2$ & \mv{216} & $1,3,5,17,27,36,43,57,72,83,95$\\
\hline
\end{tabular}
\captionsetup{justification=justified}
\caption{Largest circulant graphs of small degree $d$ and diameter $k$ found by computer search\\
* proven extremal}
\label{tab:search}
\end{table}

\bibliographystyle{amcjoucc}

\end{document}